\newcommand{\ignore}[1]{}
\newtheorem{example}{Example}
\newcommand{\secref}[1]{Section \ref{sec:#1}}
\numberwithin{equation}{section}
\def\Ed{{\mathbb{E}}}
\newcommand{\N}{\mathbb{N}}
\renewcommand{\P}{\mathbb{P}}
\renewcommand{\d}{\delta}
\newcommand{\e}{\varepsilon}
\newcommand{\s}{\sigma}
\renewcommand{\l}{\lambda}
\newcommand{\D}{\Delta}
\newcommand{\Coloc}{\mathcal{C}^{loc}}
\newcommand{\Co}{\mathcal{C}}
\newcommand{\Coglo}{\mathcal{C}^{glo}}
\newcommand{\be}{\begin{equation}}
\newcommand{\ee}{\end{equation}}
\newtheorem{teorema}{Theorem}
\newtheorem{proposicao}[equation]{Proposition}
\newtheorem{definition}{Definition}
\newtheorem{lema}{Lemma}
\newtheorem{corolario}[equation]{Corollary}
\newtheorem{obs}{Remark}
\newenvironment{claim}[1]{\par\noindent\textit{Claim:}\space#1}{}
\newenvironment{claimproof}[1]{\par\noindent\textit{Proof of the claim:}\space#1}{\hfill $\blacksquare$}
\def\sF{\mathcal{F}}
\renewcommand{\Pr}[1]{\mathbb{P}\left(#1\right)}
\begin{document}
\title{Disparity of clustering coefficients in the Holme-Kim network model}
\author{Roberto I. Oliveira, Rodrigo Botelho Ribeiro, Remy Sanchis}

\begin{abstract}The Holme-Kim random graph processes is a variant of the Barab\'{a}si-\'{A}lbert scale-free graph that was designed to exhibit clustering. In this paper we show that whether the model does indeed exhibit clustering depends on how we define the clustering coefficient. In fact, we find that local clustering coefficient remains typically positive whereas global clustering tends to $0$ at a slow rate. These and other results are proven via martingale techniques, such as Freedman's concentration inequality combined with a bootstrapping argument.\end{abstract}
\maketitle

\section{Introduction}

The theory of Random Graphs traces its beginnings to the seminal work of Erd\"{o}s and Renyi  \cite{ER1}. For about 40 years, most papers in this area were about homogeneous models, where vertices are statistically indistinguishable. The original Erd\"{o}s R\'{e}nyi model and random regular graphs both fall in this category \cite{BollobasRG,JLR}. 

More recently, inhomogeneous random graphs have also gained prominence, as large-scale networks of social, biological and technological origins tend to be very far from homogeneous. Research on models of such ``complex networks"~has attracted much interest in Statistical Physics, Systems Biology, Sociology and Computer Science, as well as in Mathematics. We will not attempt to survey this huge area of research, but the interested reader is directed to \cite{Newman} for a broad survey of the non-rigorous literature and to \cite{ChungLu,DurrettRGD,vdH} for compendia of rigorous results for many different models.

Two important complex network features are especially important for this paper. The first one is {\em power-law}, or {\em scale-free}, degree distribution. It is believed that many real-life networks have constant average degree (relative to network size) but a highly skewed degree distribution, where the fraction of nodes of degree $k$ behaves roughly as $k^{-\beta}$ for some~$\beta>0$. This contrasts with the Poisson degree distribution of sparse Erd\"{o}s-R\'{e}nyi graphs. A well known scale-free model is the seminal Barab\'{a}si-\'{A}lbert preferential attachment random graph model \cite{AB,BollobasEtAl_Degree}. In this case $\beta=3$ is the power law exponent, but any $2<\beta\leq 3$ may be achieved by minor modifications of the process \cite{BuckleyOsthus_Degree}. 

Another important feature of certain social and metabolic networks is so-called {\em clustering}, meaning  that ``friends of friends tend to be friends." That is, if a node $v$ is connected to both $w$ and $u$, then $u$ and $w$ are likely to be connected. This trait is not captured by the Barab\'{a}si-\'{A}lbert model, but the Small World model of Strogatz and Watts \cite{SW}, which is also extremely well-known, does have this property (but has no power law).


\subsection{The Holme-Kim model}\label{sec:thehkmodel}

In this paper we provide a rigorous analysis of a specific non-homogeneous random graph model, whose motivation was to combine scale-freeness and clustering. This model was introduced in 2001 by Holme and Kim \cite{HK}. The Holme-Kim or HK model describes a random sequence of graphs $\{G_t\}_{t\in\N}$ that will be formally defined in \secref{HKdef}. Here is an informal description of this evolution. Fix two parameters $p\in [0,1]$ and a positive integer~$m>1$, and start with a graph $G_1$. For $t>1$, the evolution from $G_{t-1}$ to $G_t$ consists of the addition of a new vertex $v_t$ and $m$ new edges between $v_t$ and vertices already in $G_{t-1}$. These $m$ edges are added sequentially in the following fashion. 

\begin{enumerate}
\item The first edge is always attached following the {\em preferential attachment (PA)} mechanism, that is, it connects to a previously existing node $w$ with probability proportional to the degree of $w$ in $G_{t-1}$. 
\item Each of the $m-1$ remaining edges makes a random decision on how to attach.
\begin{enumerate}
\item With probability $p$, the edge is attached according to the {\em triad formation (TF)} mechanism. Let $w'$ be the node of $G_{t-1}$ to which the previous edge was attached. Then the current edge connects to a neighbor $w$ of $w'$ chosen with probability proportional to number of edges between $w$ and $w'$.
\item With probability $1-p$ ($p\in [0,1]$ fixed) the edge follows the same PA mechanism as the first edge (with fresh random choices). 
\end{enumerate}
\end{enumerate}

The case $p=0$ of this process, where only preferential attachment steps are performed, is essentially the Barab\'{a}si-\'{A}lbert model \cite{AB}. The triad formation steps, on the other hand, are reminiscent of the copying model by Kumar et al. \cite{Kumar}. Holme and Kim argued on the basis of simulations and non-rigorous analysis that their model has the properties of scale-freeness and positive clustering. 

Our rigorous results partly confirm their findings. The degree power law can be checked by known methods. On the other hand, we show that there were aspects of the clustering phenomenon (or lack thereof) that were not made evident in \cite{HK} or in other papers in the large networks literature (e.g. \cite{ORS1}). We will see that the question of whether the Holme-Kim model~has clustering admits two different answers depending on how we define clustering. In fact, we will argue that this same phenomenon should hold for a wide variety of network models. 

\subsection{Two distinct clustering coefficients} To clarify this, we will need some graph-theoretic definitions, valid for any graph $G$. We state somewhat informally below and give precise definitions in \secref{prelim}.\begin{itemize}
\item A {\em triangle} in $G$ is a set of three vertices that are mutually connected by edges. 
\item A {\em cherry} (or path of length two) in $G$ is a set of three vertices $\{u,v,w\}$ of $G$ where $u$ and $w$ are both adjacent to $v$ (we call $v$ the middle vertex).  
\item The {\em local clustering coefficient} $\mathcal{C}_{G}^{loc} $ measures the average, over vertices $v$ of $G$, of the fraction of pairs of neighbors of $v$ that are connected by edges. That is, if each pair of neighbors of $v$ counts as a ``potential triangle involving $v$", then $\mathcal{C}_{G}^{loc}$ is average over vertices $v$ of the ratio of ``actual triangles involving $v$"~to ``potential triangles~involving $v$."
\item The {\em global clustering coefficient} $\mathcal{C}_{G}^{glo}$ measures the fraction of all cherries $u\sim v\sim w$ in $G$ that also satisfy $u\sim w$. If we count each cherry $u\sim v\sim w$ as a ``potential triangle" in $G$, then $\mathcal{C}_{G}^{glo}$ measures the ratio of actual to potential triangles in the whole of $G$, multiplied by a normalization constant so that $0\leq \Coglo_{G}\leq 1$ (note that each actual triangle contains several cherries, if vertex labels are taken into account).
\end{itemize}    
Bollob\'{a}s and Riordan \cite{BR} observed that $\Coloc_G$ and $\Coglo_G$ are used interchangeably in the non-rigorous literature. They warned that:
\begin{quotation}{\em 
In more balanced graphs the definitions will give more similar values, but they will still differ by at least a constant factor much of the time \cite{BR}.}
\end{quotation}
In fact, more extreme differences are possible for non-regular graphs.
\begin{example}Build a graph $G$ consisting of an edge $e$ and $n-2$ other vertices connected to the two endpoints of $e$, it is easy to see that $\Coloc_G = 1 - \frac{2}{n}$. On the other hand, it is straightforward to see that $\Coglo_G = \frac{3(n-2)}{n-2+(n-1)(n-2)}=\frac{3}{n}$.
\end{example} 

\subsection{Our results} The main results in this paper show that such a disparity between local and global clustering does indeed occur in the specific case of the Holme-Kim model, albeit in a less extreme form than what the Example suggests. 

\begin{teorema}[Positive local clustering for HK]\label{thm:local}Let $\{G_t\}_{t\geq 0}$ be the sequence of graphs generated by the Holme-Kim model with parameters $m\geq 2$ and $p\in (0,1)$. Then there exists $c>0$ depending only on $m$ and $p$ such that the local clustering coefficients $\Coloc_{G_t}$ of the graphs $G_t$ satisfy:
\[\lim_{t\to +\infty}\Pr{\Coloc_{G_t}\geq c}=1.\]\end{teorema}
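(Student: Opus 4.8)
The plan is to exhibit a constant fraction of vertices whose local clustering coefficient is bounded below by a constant with high probability, and then use these to bound the average. The natural candidates are the vertices $v_t$ that arrive with $m$ new edges: for such a vertex, the $m-1$ edges after the first one each independently (with probability $p$) perform a triad-formation step, which by construction closes a triangle through $v_t$. So a positive fraction of the $\binom{m}{2}$ potential triangles at $v_t$ are actual triangles at the very moment $v_t$ is born, as long as the random decisions and the preferential/triad choices do not conspire to land all edges on the same few vertices. The key point is that the degree of $v_t$ stays bounded (say by $2m$) for a positive fraction of vertices, because the probability that $v_t$ ever gets chosen by a later preferential-attachment step decays, and the expected number of extra edges accrued by $v_t$ up to time $t$ is $O(m\log(t/t))$, i.e.\ $O(1)$ in expectation when we only look a bounded multiplicative factor into the future. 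Restricting to vertices born in, say, the time window $[\epsilon t, t]$ and never subsequently hit too many times, we get $\Theta(t)$ vertices each contributing $\Coloc$-value at least some $c_0=c_0(m,p)>0$, hence $\Coloc_{G_t}\geq c_0 \cdot \Theta(1)$ with high probability.

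**Key steps, in order.**
First I would set up the filtration $(\sF_t)$ generated by the graph process and, for a fixed vertex $v_s$, write $d_t(v_s)$ for its degree at time $t\geq s$. Second, I would show that at birth, with probability bounded below by a constant $q=q(m,p)>0$, the vertex $v_s$ has at least one triad-formation edge among its $m$ initial edges whose endpoint $w$ is distinct from the first endpoint $w'$ and from the other neighbors, so that $v_s$ participates in at least one triangle and has degree exactly $m$; more carefully, one shows that with probability bounded below, $v_s$ is born with $\Omega(m)$ edges forming $\Omega(m)$ edge-disjoint triangles through $v_s$, giving $\Coloc$-contribution $\geq c_1$. Third — the degree-control step — I would bound the probability that $d_{t}(v_s)\geq 2m$ for $v_s$ with $s\in[\epsilon t,t]$: using that at each later time $r$ the chance $v_s$ gains an edge is $O(m \, d_r(v_s)/(m r))=O(d_r(v_s)/r)$, a standard supermartingale or direct union-bound argument (or Freedman's inequality, as the authors advertise) shows $\Pr{d_t(v_s)\geq 2m} \leq$ some function of $s/t$ that is $<1$ for $s\geq \epsilon t$. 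Fourth, I would combine: let $N$ be the number of vertices $v_s$, $s\in[\epsilon t, t]$, that are born ``good'' (triangles at birth) and stay of degree $<2m$; then $\Ed[N]=\Theta(t)$, and a second-moment or Azuma/Freedman concentration bound shows $N\geq c_2 t$ whp. Each such vertex has $\Coloc$-value $\geq c_1/\binom{2m}{2} = c_3$, and since $G_t$ has $t$ vertices, $\Coloc_{G_t}\geq c_3 c_2 =: c$ whp.

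**Main obstacle.**
The delicate point is the degree-control step combined with the requirement that the triangles present at birth are not destroyed — but triangles are never destroyed, so the real issue is purely the concentration of $d_t(v_s)$ and of the count $N$. The degree increments of $v_s$ are not independent and the ``attractiveness'' $d_r(v_s)$ is itself random and (weakly) increasing, so one cannot directly apply Chernoff; this is exactly where Freedman's inequality for martingales with bounded increments and controlled predictable quadratic variation is needed, possibly together with a bootstrapping step: first establish a crude a.s.\ upper bound $d_r(v_s)\leq r^{o(1)}$ or $d_r(v_s) \leq C\log r$ on a good event, feed that back into the variance estimate, and conclude the sharp $O(1)$ bound on a bounded time horizon. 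A secondary technical nuisance is handling the triad step's dependence on the previous edge's endpoint when estimating $q$; this requires only an elementary case analysis on where the first PA edge lands (it suffices that with constant probability it lands on a vertex of degree $O(m)$ with at least two neighbors, which holds since a constant fraction of the graph's degree mass sits on such vertices). I expect the Freedman-plus-bootstrap degree bound to be the technical heart of the argument.
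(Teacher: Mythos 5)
Your overall strategy is sound and, at its core, rests on the same two facts as the paper's proof: a constant fraction of vertices have a triangle at birth (one TF coin suffices), and a constant fraction of vertices still have bounded degree at time $t$. But your route to the second fact is genuinely different and, as sketched, has one under-justified step. The paper does not do any per-vertex degree control here: it invokes its Theorem \ref{teo:powerlaw}, which already gives whp concentration of $N_t(m)$, the number of vertices of degree exactly $m$ at time $t$, around $D_m t=\Theta(t)$; it then only needs a Chernoff bound for the independent TF-labels to conclude that $\Theta(t)$ degree-$m$ vertices contain a triangle, each contributing $\mathcal{C}_{G_t}(v)\geq 2/m^2$. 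In your scheme, the single-vertex estimate for $v_s$ with $s\in[\epsilon t,t]$ is actually much easier than you suggest: since $d_t(v_s)/\phi(t)$ is a martingale, $\mathbb{E}[d_t(v_s)\mid\sF_s]\asymp m\sqrt{t/s}=O(1)$, so a Markov bound (conditional on $\sF_s$, hence compatible with the birth coins) gives $\mathbb{P}(d_t(v_s)\geq K\mid\sF_s)\leq \delta$ with $\delta$ as small as you like by taking $K=K(m,p,\epsilon)$ large; no Freedman inequality or bootstrap is needed for this theorem (that machinery is what the paper uses for the cherry count $C_t$ in the global clustering result, not here). Note also that with your fixed threshold $2m$ the bound $\mathbb{E}[N]\geq\bigl(p-\mathbb{P}(d_t\geq 2m)\bigr)(1-\epsilon)t$ can be vacuous for small $p$; you must either take $K$ large relative to $1/p$ or multiply the conditional probabilities as above.

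The genuine gap is your final step, ``a second-moment or Azuma/Freedman concentration bound shows $N\geq c_2 t$ whp.'' The per-vertex estimates only give $\mathbb{E}[N]=\Theta(t)$, and upgrading this to a whp statement is precisely the nontrivial part: the events $\{d_t(v_s)<K\}$ for different $s$ are dependent, and an Azuma bound for the Doob martingale of $N$ requires a coupling argument showing that altering the choices of a single step changes $\mathbb{E}[N\mid\sF_r]$ by $O(1)$ (this is essentially the same argument the paper carries out in the appendix to concentrate $N_t(d)$), while a second-moment argument requires controlling degree--degree correlations, which you do not sketch. So your proof is completable, but only after supplying exactly the concentration input that the paper packages as Theorem \ref{teo:powerlaw}; as written, this step is asserted rather than proved. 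The secondary issue you flag about the TF endpoint coinciding with other neighbors is minor (a TF step closes a triangle whenever the chosen neighbor differs from the previously chosen vertex, the self-loop at vertex $1$ being the only degenerate case), and the paper glosses over it as well.
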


\begin{teorema}[Vanishing glocal clustering for HK]\label{thm:global} Let $\{G_t\}_{t\geq 0}$ be as in Theorem \ref{thm:local}. Then there exist constants $b_1,b_2>0$ depending only on $m$ and $p$ such that the global clustering coefficients $\Coglo_{G_t}$ satisfy:
\[\lim_{t\to +\infty}\Pr{\frac{b_1}{\log t}\leq \Coglo_{G_t}\leq \frac{b_2}{\log t}}=1.\]\end{teorema}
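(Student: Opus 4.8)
The plan is to estimate separately the numerator and denominator of $\Coglo_{G_t}=\frac{3T_t}{C_t}$, where $T_t$ is the number of triangles of $G_t$ and $C_t=\sum_{v\in G_t}\binom{d_t(v)}{2}$ is the number of cherries (the normalisation making $0\le\Coglo\le 1$ is built in, since every closed cherry is a cherry). Writing $S_t:=\sum_v d_t(v)^2$ we have $C_t=\tfrac12(S_t-2mt)+O(1)$, so the theorem reduces to showing that, with probability $\to 1$, one has $T_t=\Theta(t)$ and $S_t=\Theta(t\log t)$; the constants $b_1,b_2$ then come out of these estimates. The bound on $T_t$ is elementary. For the upper bound, every triangle has a unique youngest vertex $v_s$, and $v_s$ has exactly $m$ neighbours among the older vertices at birth, so at most $\binom m2$ triangles have $v_s$ as youngest vertex and $T_t\le\binom m2 t+O(1)$ deterministically. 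For the lower bound, each triad-formation step at time $s$ produces a triangle containing $v_s$ (namely $v_s$, the previous target $w'$, and the chosen neighbour $w$ of $w'$); these are pairwise distinct, and the number of $s\le t$ performing at least one TF step is a sum of i.i.d.\ Bernoulli$(1-(1-p)^{m-1})$ variables, hence $\ge\tfrac12(1-(1-p)^{m-1})t$ with probability $\to 1$ by Hoeffding's inequality. Thus $T_t=\Theta_{m,p}(t)$ with probability $\to 1$.

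The heart of the proof is $S_t=\Theta(t\log t)$. The key structural fact is that in this model \emph{every} edge added by $v_{s+1}$, PA \emph{or} TF, attaches to a vertex that is, up to lower order terms, size-biased by degree: $\P(\text{target}=j\mid G_s)=\frac{d_s(j)}{2ms}(1+o(1))$. For a PA edge this is the definition; for a TF edge it follows from the identity $\sum_{k\sim j}\frac{d_s(k)}{\sum_\ell d_s(\ell)}\cdot\frac1{d_s(k)}=\frac{d_s(j)}{\sum_\ell d_s(\ell)}$ — a uniformly random neighbour (\emph{in} $G_{t-1}$) of a size-biased vertex is again size-biased — together with the observation that the previous target $w'$ is itself size-biased, by induction along the chain of previous edges of $v_{s+1}$, which bottoms out at the initial PA edge; the corrections from excluding already-chosen neighbours and from within-step degree changes are $o(1)$. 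Letting $\Delta_j$ be the number of new edges landing on $j$, this gives $\sum_j d_s(j)\,\Ed[\Delta_j\mid G_s]=\sum_{i=1}^m\Ed[d_s(\text{target of }e_i)\mid G_s]=\frac{S_s}{2s}(1+o(1))$, where for the TF edges one uses $\sum_k\sum_{\ell\sim k}d_s(\ell)=\sum_\ell d_s(\ell)^2=S_s$. Since $\sum_j\Delta_j=m$ forces $\Ed[\sum_j\Delta_j^2\mid G_s]\le m^2$ and $d_{s+1}(v_{s+1})=m$, expanding $d_{s+1}(j)^2-d_s(j)^2=2d_s(j)\Delta_j+\Delta_j^2$ yields
\[
\Ed[S_{s+1}\mid G_s]=S_s\Big(1+\tfrac{1+o(1)}{s}\Big)+\Theta(m^2),
\]
and solving this recursion — using $\prod_{1\le r<s}(1+1/r)=s$, which makes the constant forcing term resonate into a logarithm — gives $\Ed[S_t]=\Theta_{m,p}(t\log t)$, with a matching lower bound.

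To upgrade this to a high-probability statement I would follow the paper's strategy of Freedman's inequality plus a bootstrap. Decompose $S_t=S_1+\sum_{s<t}\Ed[S_{s+1}-S_s\mid G_s]+M_t$ with $M_t$ a martingale. Since $\sum_j d_s(j)\Delta_j\le D_s\cdot m$ with $D_s:=\max_j d_s(j)$, the increments of $M$ are $O(mD_s)$ and $\Ed[(M_{s+1}-M_s)^2\mid G_s]=O\big(mD_s\cdot\tfrac{S_s}{s}+m^4\big)$. A crude a priori estimate — from moment bounds on the individual degrees $d_s(j)$ (obtainable by the known methods for this family of models) and a union bound over $s\le t$ and $j$ — gives $D_s\le s^{1/2+\varepsilon}$ and $S_s\le s^{1+\varepsilon}$ for all $s\le t$, with probability $\to 1$. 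On this event the predictable quadratic variation of $M$ up to time $t$ is $O(mt^{3/2+2\varepsilon})$ and its increments are $O(mt^{1/2+\varepsilon})$, so Freedman's inequality gives $|M_t|\le t^{1-\delta}=o(\Ed[S_t])$ with probability $\to 1$ for a small $\delta>0$. Finally, since the compensator satisfies the same recursion as $\Ed[S_t]$ but driven by the random $S_s$, one detrends: $\frac{S_{t+1}}{t+1}=\frac{S_t}{t}(1+o(1/t))+\Theta(m^2/t)+\frac{M_{t+1}-M_t}{t+1}$, and $\sum_s\frac{M_{s+1}-M_s}{s+1}$ has summable quadratic variation on the good event, hence is $O(1)$ with probability $\to 1$; a Gronwall-type iteration then forces $S_t/t=\Theta(\log t)$ with probability $\to 1$, i.e.\ $S_t=\Theta(t\log t)$. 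Together with $T_t=\Theta(t)$ and $C_t=\tfrac12(S_t-2mt)$ this gives $\Coglo_{G_t}=\Theta(1/\log t)$ with probability $\to 1$.

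The main obstacle is precisely this concentration step: $S_t$ grows multiplicatively and its one-step increments are only $O(mD_s)$ with $D_s\approx\sqrt s$, so no bounded-differences/Azuma argument applies directly, and one must first pin down the maximal degree $D_s$ by a separate bootstrap to feed usable parameters into Freedman's inequality. A secondary but delicate point is the $o(1)$ bookkeeping in the $S$-recursion — exclusion of already-chosen neighbours in TF steps, nested TF chains, within-step degree changes — which must be shown not to perturb the leading coefficient $1$: were that coefficient $1-\kappa$ for a fixed $\kappa>0$, the same recursion would yield $S_t=\Theta(t)$ and hence \emph{non-vanishing} global clustering, so this is exactly where the fact that a TF edge picks a uniform neighbour among the $d_s(w')$ \emph{old} neighbours of $w'$ (and not among $d_s(w')+1$) is used in an essential way.
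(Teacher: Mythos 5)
Your route is, in outline, the same as the paper's: the same reduction $\Coglo_{G_t}=3\Delta_{G_t}/C_t$ with $\Delta_{G_t}=\Theta(t)$ handled exactly as in the paper (deterministic $\binom{m}{2}t$ upper bound, Chernoff/Hoeffding over TF steps for the lower bound); the same key structural fact that every endpoint choice, PA or TF, is size-biased by degree (this is the Claim inside Lemma \ref{lema:probdeltadt}; note it holds \emph{exactly} here, since all $m$ within-step choices are made with the time-$t$ degrees and edge multiplicities, so the $o(1)$ bookkeeping you flag at the end --- exclusions, within-step degree updates --- does not actually arise); the same recursion $\Ed[S_{s+1}\mid G_s]=(1+\tfrac1s)S_s+\Theta(m^2)$ for $S_s=\tilde{C}_s=\sum_j d_s(j)^2$; and the same concentration architecture, Freedman's inequality plus a bootstrap fed by maximum-degree control. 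Your final ``detrending'' of $S_t/t$ is precisely the paper's analysis of the Doob decomposition of $\tilde{C}_t/t$ in Theorems \ref{teo:cotasupcs} and \ref{teo:ctweaklaw}.

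The one genuine gap is the a priori estimate you feed into Freedman. From ``moment bounds on the individual degrees and a union bound over $s\le t$ and $j$'' you get $D_s\le s^{1/2+\varepsilon}$ uniformly, but that only yields $S_s\le D_s\sum_j d_s(j)\le 2m\,s^{3/2+\varepsilon}$, not the asserted $S_s\le s^{1+\varepsilon}$. The stronger bound is exactly the bootstrap content (the paper's Theorem \ref{teo:cotasupcs}: $\tilde{C}_t\le B_1 t\log^2 t$ w.h.p., obtained by a first Freedman pass on $\tilde{C}_t/t$ using only the degree bounds), and it is genuinely needed: with only $S_s\lesssim s^{3/2+\varepsilon}$ the predictable variation of your unnormalized martingale is of order $t^{2+2\varepsilon}$, so $\lambda=t^{1-\delta}$ gives nothing, and in the detrended recursion the variation is of order $t^{2\varepsilon}$, which returns only $S_t\lesssim t^{1+\varepsilon}$ rather than $\Theta(t\log t)$. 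The gap is fixable without copying the paper's preliminary pass --- you already derived $\Ed[S_s]=O(s\log s)$ from the recursion, so Markov's inequality, monotonicity of $s\mapsto S_s$, and a union bound over dyadic scales give $S_s\le A\,s^{1+\varepsilon}$ simultaneously for all $s\le t$ with probability $1-o(1)$ --- but as written the step is asserted from an argument that does not deliver it, and it is the crux of the proof rather than a routine estimate. (A minor remark: once the detrending step is in place, the intermediate bound $|M_t|\le t^{1-\delta}$ for the unnormalized martingale is redundant.)
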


Thus for large $t$, one of the two clustering coefficients is typically far from $0$, whereas the other one goes to $0$ in probability, albeit at a small rate. This shows that the remark by Bollob\'{a}s and Riordan is very relevant in the analysis of at least one network model. Our results contradict the numerical findings of \cite{ORS1}, where the Holme-Kim model is cited as a model that exhibits positive ``clustering coefficient'', but the definition of clustering used corresponds to the global coefficient\footnote{Their error may be partly explained by the slow decay of $\Coglo_{G_t}$.}.

For completeness, we will also check in the Appendix that the HK model is scale-free with power-law exponent $\beta=3 $. The proof follows from standard methods in the literature.

\begin{teorema}[The power-law for HK] \label{teo:powerlaw}Let $\{G_t\}_{t\geq 0}$ be as in the previous theorem. Also let $N_t(d)$ be the number of vertices of degree $d$ in $G_t$ and set 
\[D_t(d):=\frac{\mathbb{E}{N_{t}(d)}}{t}.\]
Then
\[ 
\lim_{t \rightarrow \infty}D_t(d) = \frac{2(m+3)(m+1)}{(d+3)(d+2)(d+1)}.
\] and \[\mathbb{P}\left( \left| N_t(d) - D_t(d)\,t \right| \ge 16d\cdot c\cdot \sqrt{t} \right) \le (t+1)^{d-m}e^{-c^2}.
\]
\end{teorema}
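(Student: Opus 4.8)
The plan is to follow the standard ``mean-field'' recursion argument for degree sequences in preferential-attachment-type models (as in \cite{BollobasEtAl_Degree,vdH}), adapting it to account for the triad-formation steps. First I would set up the notation: let $N_t(d)$ be the number of degree-$d$ vertices in $G_t$, and condition on $\sF_{t}$, the history of the process up to time $t$. When the vertex $v_{t+1}$ is inserted, it throws $m$ edges, and I would compute the conditional probability that a fixed existing vertex $u$ of degree $d$ receives at least one of these edges. Each PA edge attaches to $u$ with probability $d/(2mt)$ (the total degree in $G_t$ is $2mt$); each TF edge attaches to $u$ with a probability that is \emph{also} proportional to its degree in a suitable averaged sense, because a TF step picks a neighbor of the previously-hit endpoint with probability proportional to edge multiplicity — summing over the possible previous endpoints and using that the previous endpoint was itself hit by PA, one gets that the marginal probability a TF edge lands on $u$ is again $\frac{d}{2mt}(1+o(1))$. (This is the one genuinely model-specific computation; I would do it carefully but it is conceptually routine.) Adding the $m$ edges and the newly created vertex $v_{t+1}$ of degree $m$, one obtains the recursion
\[
\Ed[N_{t+1}(d)\mid \sF_t] = N_t(d)\Bigl(1 - \frac{d}{2t}\Bigr) + N_t(d-1)\,\frac{d-1}{2t} + \mathbbm{1}\{d=m\} + \text{(error)},
\]
valid for $d\geq m$, with $N_t(d)=0$ for $d<m$, and where the error term comes from the $o(1)$ corrections and from the (rare) events that a single new edge hits $u$ twice or that two of the $m$ new edges both hit $u$; these contribute $O(N_t(d)/t^2)$ and are harmless.

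Taking expectations gives a deterministic recursion for $\Ed[N_t(d)]$, hence for $D_t(d)=\Ed[N_t(d)]/t$. The limit is extracted by the usual device: if $D_t(d)\to D(d)$, the recursion forces
\[
D(d) = D(d)\Bigl(1-\frac{d}{2}\Bigr) + D(d-1)\,\frac{d-1}{2} + \mathbbm{1}\{d=m\},
\]
i.e. $D(d)\bigl(1+\tfrac{d}{2}\bigr) = \tfrac{d-1}{2}D(d-1) + \mathbbm{1}\{d=m\}$. Solving: $D(m) = \frac{2}{m+2}\cdot\frac{\text{const}}{\cdots}$ and for $d>m$, $D(d) = \frac{d-1}{d+2}D(d-1)$, which telescopes to $D(d) = D(m)\prod_{j=m+1}^{d}\frac{j-1}{j+2} = D(m)\cdot\frac{(m+1)m(m-1)\cdots}{(d+2)(d+1)d\cdots}$, giving $D(d) = \frac{C}{(d+1)(d+2)(d+3)}$ with the constant $C=2(m+3)(m+1)$ fixed by the initial value $D(m)=\frac{2}{(m+1)(m+2)(m+3)}\cdot 2(m+3)(m+1) = \frac{2(m+1)}{(m+2)(m+3)}\cdot\frac{m+3}{m+1}$; I would just check this arithmetic matches the stated formula. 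The convergence $D_t(d)\to D(d)$ itself follows by a standard induction on $d$ together with the elementary lemma that a sequence satisfying $a_{t+1} = (1-\frac{c_t}{t})a_t + b_t$ with $c_t\to c$ and $b_t\to b$ has $a_t/t \to b/(1+c)$; this lemma is in \cite{vdH} and I would cite it rather than reprove it.

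For the concentration bound, the plan is a Doob/Azuma martingale argument on the exposure of the $m$ edges one at a time. Fix $d$ and consider the Doob martingale $M_k = \Ed[N_t(d)\mid \text{first } k \text{ insertion-edge-steps}]$; each insertion of a vertex with its $m$ edges changes $N_t(d)$ by at most... — more precisely, I would expose the process edge-by-edge so that each of the $\le mt$ single-edge steps changes the conditional expectation of $N_t(d)$ by at most $2$ (one vertex can leave and one can enter degree class $d$), giving a bounded-difference martingale of length $mt$ with increments $\le 2$; Azuma–Hoeffding then yields $\Pr{|N_t(d)-\Ed N_t(d)| \ge \lambda} \le 2\exp(-\lambda^2/(2m t \cdot 4))$. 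To land on the exact stated form $(t+1)^{d-m}e^{-c^2}$ with the $16d\sqrt{t}$ scaling, one has to be a little more careful — the cleaner route is to note that a vertex of degree $d$ was built up from degree $m$ by receiving $d-m$ extra edges, each such event having small probability, so the relevant martingale increments near a given vertex are nonzero only rarely; alternatively one truncates on the high-probability event that no vertex in $G_t$ has degree exceeding $O(\sqrt{t})$ and uses a refined bounded-difference inequality. I would use whichever of these gives the claimed constants; matching the precise numerical bound is bookkeeping, not mathematics. The main obstacle — and the only part that is not completely routine — is the verification that TF steps contribute a degree-proportional term to the recursion with only an $o(1)$ correction; everything downstream is standard preferential-attachment machinery. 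Since this theorem is stated as a completeness result proved ``by standard methods,'' I would keep the exposition brief and lean on \cite{vdH} for the convergence lemma and on \cite{BollobasEtAl_Degree} for the martingale template.
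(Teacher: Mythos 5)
For the first assertion (the limit of $D_t(d)$) your plan is essentially the paper's: the same one-step recursion for $\mathbb{E}[N_{t+1}(d)\mid G_t]$, the same elementary convergence lemma for $a_{t+1}=(1-b_t/t)a_t+c_t$ (Lemma \ref{lema:convseq}, taken from Chung--Lu rather than van der Hofstad), and the same induction on $d$ with telescoping. The one model-specific ingredient you flag --- that each of the $m$ choices, TF ones included, lands on a fixed vertex $v$ with conditional probability exactly $d_t(v)/2mt$ --- is precisely the Claim inside Lemma \ref{lema:probdeltadt}, so nothing is conceptually missing there. (One caution: the recursion you write, $D(d)=\tfrac{d-1}{d+2}D(d-1)$ with $D(m)=2/(m+2)$, telescopes to $\tfrac{2m(m+1)}{d(d+1)(d+2)}$, so the arithmetic check you defer to the end is not a formality and must be confronted against the displayed limit.)

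The concentration bound is where you diverge from the paper, and where your plan has a genuine gap. The paper does not use a Doob/edge-exposure martingale: it argues by induction on $d$, introducing $X_t^{(d)}=\bigl(N_t(d)-D_d t+16dc\sqrt{t}\bigr)/\psi_d(t)$ with $\psi_d(t)=\prod_{s=d}^{t-1}\bigl(1-\tfrac{d}{2s}\bigr)$, showing this is a submartingale outside an event controlled by the inductive concentration bound at $d-1$ (that bound is exactly what replaces the $N_t(d-1)$ term in the conditional recursion), and then applying a Chung--Lu martingale inequality that tolerates a failure probability; the factor $(t+1)^{d-m}$ is the accumulated failure probability of this induction, and the radius $16dc\sqrt{t}$ is what makes the inductive step close. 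Your alternative --- expose the $\le mt$ single-edge steps and apply Azuma with increments bounded by $2$ --- rests on the unproven claim that one exposed edge changes $\mathbb{E}[N_t(d)\mid\cdot\,]$ by at most an absolute constant. That is not the same as saying one step moves at most two vertices in or out of the degree class: altering one edge changes all subsequent attachment probabilities, and in the Holme--Kim model it changes neighbourhoods and hence the law of every later TF choice, so bounding the effect on the \emph{conditional expectation} requires a coupling argument in the style of Bollob\'as--Riordan--Spencer--Tusn\'ady, adapted to the TF mechanism; you do not supply it, and it is the actual mathematical content of your route. Dismissing the passage to the stated bound as bookkeeping also misreads its form: the $(t+1)^{d-m}$ factor is structural (it records the induction on $d$), not a constant to be tuned --- conversely, if your Lipschitz claim could be established, Azuma would give a cleaner bound without that factor, but that claim is precisely the missing step.
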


\subsection{Heuristics and a seemingly general phenomenon}\label{sec:heuristics} The disparity between $\Coloc_G$ and $\Coglo_G$ should be a {\em general phenomenon for large scale-free graph models with many (but not too many triangles).} This will transpire from the follwowing heuristic analysis of the Holme-Kim case with $p\in (0,1)$. 

To begin, it is not hard to understand why Theorem \ref{thm:local} should be true. By Theorem \ref{teo:powerlaw}, there is a positive fraction of nodes with degree $m$. Moreover, a positive fraction of these vertices are contained in at least one triangle because of TF steps. A more general observation may be made.

\begin{quotation}{\em  {\bf Reason for positive local clustering:} if a positive fraction of nodes have degree $\leq d$ (assumed constant), and a positive fraction of these nodes are contained in at least one triangle, then the local clustering coefficient $\Coloc_{G_t}$ must be bounded away from zero.} \end{quotation} 

We now argue that {\em the vanishing of $\Coglo_{G_t}$ should be a consequence of the power law degree distribution}. The global clustering coefficient $\Coglo_{G_t}$ is essentially the ratio of the number of triangles to the number of cherries in $G_t$, the latter being denoted by $C_t$. Now, one can easily show that the number of triangles in $G_t$ grows linearly in $t$ with high probability, so: 
\[\Coglo_{G_t} \approx \frac{\mbox{\# of triangles in $G_t$}}{C_t}\approx \frac{t}{C_t}.\]

To estimate $C_t$, we note that each vertex $v$ of degree $d$ in $G_t$ is the ``middle vertex"~of exactly $d(d-1)/2\approx d^2$ cherries. This means
\begin{eqnarray*}\frac{C_t}{t} &\approx & \sum_{d=1}^t\,\frac{N_t(d)}{t}\,d^2\\ 
\left(\mbox{$\frac{N_t(d)}{t}\sim D_t(d) \approx d^{-3}$ by Theorem \ref{teo:powerlaw}}\right) &\approx & \sum_{d=1}^t \frac{1}{d} \approx \log t.\end{eqnarray*}
Our reasoning is not rigorous because it requires bounds on $N_t(d)$ for very large $d$. However, we feel our argument is compelling enough to be true for many models. In fact, considering the case where $N_t(d)/t\approx d^{-\beta}$ for $0<\beta\leq 3$, one is led to the following.

\begin{quotation}{\em {\bf Heuristic reason for large number of cherries:} if the fraction of nodes of degree $d$ in $G_t$ is $\approx d^{-\beta}$ for some $0< \beta\leq 3$, the number of cherries $C_t$ is superlinear in $t$. More precisely, we expect $C_t/t \approx t^{3-\beta}$ for $0<\beta<3$ and $C_t/t\approx \log t $ for $\beta=3$.}\end{quotation}

The power law range $0<\beta\leq 3$ corresponds to most models of large networks in the literature. Likewise, we believe that the disparity between $\Coglo_{G_t}$ and $\Coloc_{G_t}$ should hold for all ``natural"~random graph sequences with many triangles and power law degree distribution with exponent $0<\beta\leq 3$.  The general message is this.

\begin{quotation}{\em {\bf Heuristic disparity between local and global clustering:} achieving positive local clustering is ``easy": just introduce a density of triangles in sparse areas of the graph. On the other hand, if the number of triangles in $G_t$ grows linearly with time, and the fraction of nodes of degree $d$ in $G_t$ is $\approx d^{-\beta}$ for some $0< \beta\leq 3$, then one expects a vanishingly small global clustering coefficient.}\end{quotation}

\subsection{Main technical ideas} At a high level, our proofs follow standard ideas from previous rigorous papers on complex networks. For instance, suppose one wants to keep track of the number of nodes of degree $d$ at time $t$, for $d=m,m+1,\dots,D$. Letting $N_t=(N_t(m),\dots,N_t(D))$, the basic strategy adopted in several previous papers is to find a deterministic matrix $\mathcal{M}_{t-1}$ and a deterministic vector $r_{t-1}$, both measurable with respect to $G_0,\dots,G_{t-1}$, such that:
\[N_t = \mathcal{M}_{t-1}\,N_t + r_{t-1} + \epsilon_t, \mbox{ where }\mathbb{E}(\epsilon_t\mid G_0,\dots,G_t)\approx 0.\]
This can be seen as ``noisy version"~of the deterministic recursion $N_t = \mathcal{M}_{t-1}N_{t-1} + r_{t-1}$ with $\epsilon_t$ the ``noise"~term. One then studies the recursion and uses martingale techniques (especially the Azuma-H\"{o}ffding inequality) to prove that $N_t$ concentrates around the solution of the deterministic recursion. Our own proof of the degree power law follows this outline, and is only slightly different from the one in \cite{ChungLu}. 

Once the degree sequence is analyzed, Theorem \ref{thm:local} is then a matter of observing that a density of vertices of degree $m$ will be contained in at least one triangle, due to a TF step. On the other hand, the analysis of global clustering is harder due to the need to estimate the number of cherries $C_t$. Justifying the heuristic calculation above would require strong control of the degree distribution up to very large values of $d$. We opt instead to write a ``noisy recursion"~for $C_t$ itself. However, the increments in this noisy recursion can be quite large, and the Azuma-H\"{o}ffding inequality is not enough to control the process. We use instead Freedman's concentration inequality, which involves the quadratic variation, but even that is delicate because the variation might ``blow up"~in certain unlikely events. In the end, we use a kind of ``bootstrap" argument, whereby a preliminary estimate of $C_t$ is fed back into the martingale calculation to give sharper control of the predictable terms and the variation. The upshot is a Weak Law of Large Numbers for $C_t$, given in Theorem \ref{teo:ctweaklaw} below:
\begin{teorema}[The Weak Law of large Numbers for $C_t$]\label{teo:ctweaklaw}Let $C_t$ be the number of cherries in $G_t$, then
\[
\frac{C_t}{t\log t} \stackrel{\mathbb{P}}{\longrightarrow} \binom{m+1}{2}.
\]
\end{teorema}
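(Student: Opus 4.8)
The plan is to derive a \emph{noisy linear recursion} for $C_t$ and to analyse it with martingale methods, following the general strategy outlined in the introduction. Write $\sF_t=\sigma(G_0,\dots,G_t)$, let $Z_t=\sum_v d_t(v)$ be the total degree (deterministic: $Z_t=Z_1+2m(t-1)$), and note that $C_t=\sum_v\binom{d_t(v)}{2}$ equals the number of cherries up to an additive error that is $o_{\P}(t\log t)$ (it comes from the rare multi-edges), so we may work with $\sum_v\binom{d_t(v)}{2}$. Since inserting an edge at a vertex of current degree $d$ raises $\sum_v\binom{d_v}{2}$ by exactly $d$, and the new vertex $v_s$ contributes $\sum_{j=1}^m(j-1)=\binom m2$, we get the exact one-step identity
\[
C_s-C_{s-1}=\binom m2+\sum_{j=1}^{m}d^{(j-1)}_{s-1}(w_{s,j}),
\]
where $w_{s,j}$ is the vertex to which the $j$-th edge of step $s$ attaches and $d^{(j-1)}_{s-1}(w)$ is its degree just before that attachment (which differs from $d_{s-1}(w)$ by at most $m$). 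The key point is an \emph{invariance identity}: conditionally on $\sF_{s-1}$, every $w_{s,j}$ is, up to $O(m/s)$ total-variation error, a preferential-attachment pick in $G_{s-1}$. For PA sub-steps this is immediate; for triad-formation sub-steps it holds because a multiplicity-weighted random neighbour of a PA pick is again a PA pick, as $\sum_{w'}\tfrac{d(w')}{Z}\cdot\tfrac{\mathrm{mult}(w,w')}{d(w')}=\tfrac{d(w)}{Z}$, and one iterates this over chains of TF sub-steps. Hence $\Ed[\,d_{s-1}(w_{s,j})\mid\sF_{s-1}\,]=\tfrac{\sum_v d_{s-1}(v)^2}{Z_{s-1}}+(\mathrm{error})=\tfrac{2C_{s-1}}{Z_{s-1}}+1+(\mathrm{error})$, using $\sum_v d_v^2=2C_{s-1}+Z_{s-1}$; summing over $j$ and adding $\binom m2$ produces
\[
\Ed[\,C_s\mid\sF_{s-1}\,]=C_{s-1}\Big(1+\tfrac{2m}{Z_{s-1}}\Big)+\binom{m+1}{2}+\varepsilon_s ,
\]
with the clean constant $\binom m2+m=\binom{m+1}{2}$ and an error term $\varepsilon_s$ (from within-step degree updates, repeated neighbours and degenerate TF steps) that I will bound by $O\!\big(s^{-1}+C_{s-1}s^{-2}\big)$.

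Second, I would solve the recursion by the integrating factor $\Pi_s:=\prod_{r=2}^{s}\!\big(1+\tfrac{2m}{Z_{r-1}}\big)$. Since $\tfrac{2m}{Z_{r-1}}=\tfrac1{r+O(1)}$ the product telescopes exactly, so $\Pi_s=\gamma s+O(1)$ for an explicit $\gamma=\gamma(m,G_1)>0$ and $\sum_{s=2}^{t}\Pi_s^{-1}=\gamma^{-1}\log t+O(1)$. Setting $D_s:=C_s-\Ed[\,C_s\mid\sF_{s-1}\,]$ (a martingale difference), dividing the recursion by $\Pi_s$ and telescoping yields
\[
C_t=\Pi_t\Big(C_1+\binom{m+1}{2}\sum_{s=2}^{t}\Pi_s^{-1}+\sum_{s=2}^{t}\Pi_s^{-1}\varepsilon_s+M_t\Big),\qquad M_t:=\sum_{s=2}^{t}\Pi_s^{-1}D_s .
\]
The main term is $\binom{m+1}{2}\,\Pi_t\cdot\gamma^{-1}\log t\,(1+o(1))=\binom{m+1}{2}\,t\log t\,(1+o(1))$ --- the factor $\gamma$ cancels, which is why the limiting constant does not depend on $G_1$ --- so it remains to prove $\sum_{s\le t}\Pi_s^{-1}\varepsilon_s=o_{\P}(\log t)$ and $M_t=o_{\P}(\log t)$.

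Both are handled by a \emph{bootstrap}, which is also where the main obstacle lies. A crude preliminary estimate $\Ed[C_s]=O(s\log s)$ (a consequence of the recursion itself, since $\varepsilon_s=O(\mathrm{poly}(m)+C_{s-1}s^{-2})=O(\mathrm{poly}(m))$ as $C_s=O(s^2)$ deterministically) makes $\Ed[\,|\Pi_s^{-1}\varepsilon_s|\,]=O(s^{-2}+\Ed[C_{s-1}]\,s^{-3})=O(s^{-2}\log s)$ summable, hence $\sum_{s\le t}\Pi_s^{-1}\varepsilon_s=O_{\P}(1)$. For $M_t$ the increments are of order $\Pi_s^{-1}\max_v d_{s-1}(v)$ and the predictable quadratic variation is bounded by $\sum_s\Pi_s^{-2}\big(O(m^4)+O(m^2)\tfrac{\sum_v d_{s-1}(v)^3}{Z_{s-1}}\big)$, using the $k=2$ case of the invariance identity, $\Ed[\,d_{s-1}(w_{s,j})^2\mid\sF_{s-1}\,]\approx\tfrac{\sum_v d_{s-1}(v)^3}{Z_{s-1}}$; neither is bounded a priori, and the third-moment sum $\sum_v d_s(v)^3$ --- typically of order $s^{3/2}$ --- can blow up on unlikely events, so no concentration inequality applies off the shelf. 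Here one re-enters the bootstrap: an expectation recursion for $\sum_v d_s(v)^3$ (or the degree estimates behind Theorem~\ref{teo:powerlaw}) gives, with probability tending to $1$, that $\max_v d_s(v)\le\sqrt s\log s$ and $\sum_v d_s(v)^3\le s^{3/2}\log^2 s$ for all $s\le t$; on this good event the increments of $M$ are $O(m)$ and (using $\Pi_s\asymp s$) its quadratic variation is $O(\mathrm{poly}(m))$. Applying Freedman's inequality to $M$ stopped at the first failure of the good event then gives $\Pr{\,|M_t|\ge\eta\log t\,}\le 2\exp\!\big(-\Omega(\eta\log t/m)\big)\to 0$ for every fixed $\eta>0$, so $M_t=o_{\P}(\log t)$. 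Combining the three contributions gives $C_t/(t\log t)\to\binom{m+1}{2}$ in probability. The hardest part is thus the control of $M_t$: because the degree sequence is heavy-tailed, the noise in the recursion for $C_t$ has quadratic variation governed by $\sum_v d_s(v)^3$, which has no useful deterministic bound, and only the bootstrap --- first crude control of the degrees and of $C_s$, then re-inserted into Freedman's bound --- keeps it in check. Moreover, this preliminary $O(s\log s)$ bound on $C_s$ is also what guarantees that the drift error $\varepsilon_s$ stays lower order, so that it does not corrupt the leading constant $\binom{m+1}{2}$.
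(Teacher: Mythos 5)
Your overall architecture is the same as the paper's: the marginal-PA identity for each attachment (the Claim inside Lemma \ref{lema:probdeltadt}), the resulting noisy recursion whose drift constant is $\binom{m}{2}+m=\binom{m+1}{2}$, a Doob-type decomposition (your integrating factor $\Pi_s\asymp s$ is just the paper's division of $\tilde C_t=\sum_v d_t(v)^2$ by $t$), control of the drift error of order $O(s^{-1}+C_{s-1}s^{-2})$, and Freedman's inequality for the martingale part restricted to a good event — the bootstrap. The computation of the constant, the telescoping, and the treatment of the drift error (expectation bound plus Markov) are all sound.

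The genuine gap is in how you certify the good event. You need, with probability tending to $1$, a bound like $\sum_v d_s(v)^3\le s^{3/2}\log^2 s$ (equivalently $C_s\le B\,s\log^2 s$ together with $d_{\max}(G_s)\le\sqrt{s}\log s$) \emph{simultaneously for all} $s\le t$, and you assert this follows from ``an expectation recursion for $\sum_v d_s(v)^3$ (or the degree estimates behind Theorem \ref{teo:powerlaw})''. Neither source delivers it as stated. An expectation bound $\mathbb{E}\bigl[\sum_v d_s(v)^3\bigr]=O(s^{3/2})$ combined with Markov gives individual failure probabilities decaying only like $1/\log^2 s$, and the union bound over the $t$ relevant times diverges; your stopping-time device does not rescue this, because transferring the conclusion from the stopped martingale back to $M_t$ still requires the good event to hold up to time $t$ with probability $1-o(1)$. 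On the other hand, the maximum-degree estimates alone give only $\sum_v d_s(v)^3\le d_{\max}^2\sum_v d_s(v)=O(s^2\log^2 s)$, which makes your quadratic variation of order $\log^3 t$; Freedman then yields only $M_t=O(\log^{3/2+\epsilon}t)$, not $o(\log t)$, so the limiting constant cannot be extracted. What is missing is precisely the intermediate \emph{high-probability} preliminary estimate with polynomially small failure probability — in the paper this is Theorem \ref{teo:cotasupcs} ($C_s\le B_1 s\log^2 s$ w.h.p.), itself proved by a first application of Freedman on a good event built from the simultaneous degree bounds of Lemma \ref{lema:cotaunidto}; only then does the union bound over $s$ work and the second Freedman pass give quadratic variation $o(\log t)$, hence $M_t=o(\log t)$. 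Your sketch gestures at ``first crude control of the degrees and of $C_s$'', but the only crude control of $C_s$ you actually establish is in expectation, and that is not enough. The gap is fixable — either run the first Freedman pass exactly as the paper does, or exploit the monotonicity of $\sum_v d_s(v)^3$ in $s$ and apply Markov only at dyadically spaced times so that the failure probabilities become summable — but some such argument must be supplied; as written, the key step of the bootstrap is asserted rather than proved.
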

Overall, we believe our martingale analysis of $C_t$ is our main technical contribution.
\subsection{Organization.} 
The remainder of this paper is organized as follows. Section \ref{sec:prelim} reviews some standard notation, introduces the relevant graph theoretic concepts and records martingale inequalities. Section~\ref{sec:HKdef} presents a formal definition of the model. In Section~\ref{sec:degrees} we prove technical estimates for the degree which will be useful throughout the paper. Sections \ref{sec:clusloc} and~\ref{sec:cluglo} are devoted to prove the bounds for the local and the global clustering coefficients, respectively. Section \ref{sec:finalcomm} presents a comparative explanation for the so distinct behavior of the clustering coefficients. The proof of the power-law distribution is left to the appendix because it follows well known martingale arguments.

\section{Preliminaries}\label{sec:prelim}

\subsection{Set and multiset notation}\label{sec:notation} $\N=\{0,1,2,3,\dots\}$. For $n\in\N\backslash\{0\}$, $[n]:=\{1,2,\dots,n\}$.
Given a set $S$, $|S|$ denotes its cardinality, and $\binom{S}{k}$ denotes the collection of all subsets of $S$ of size $k\in\N$. The binomial $\binom{n}{k}$ is the number of elements in $\binom{[n]}{k}$. 

A multiset $M$ consists of a base set $M_0$ and, for each $s\in M_0$, a {\em multiplicity} $m_s\in\N\backslash\{0\}$. We say that a multiset $M$ is contained in set $S$ (and write $M\subset S$) if $M_0\subset S$, and we say $s\in M$ if $s\in M_0$.

\subsection{Basic graph theory}
Recall that a graph $G=(V_G,E_G)$ consists of a set $V_G$ of vertices and a multiset $E_G\subset \binom{V_G}{2}$ of edges. This implies that there might be multiple ``parallel"~edges between any given $v,w\in V_G$.  Given $G$ and $v,w\in V_G$, we say $v$ and $w$ are {\em neighbors}, and write $v\sim_G w$, if $\{v,w\}\in E_G$. We also write $\Gamma_G(v) = \{w\in V\,:\, w\sim_G v\}$ for the neighborhood of $v\in V_G$ and $e(\Gamma_G(v))$ for the number of edges between the neighbors of~$v$. Since we allow multiple edges, for all vertices $v,w \in V_G$, we let $e_{G}(v,w)$ be the number of edges between $v$ and $w$. In this case, we may define the \textit{degree} of $v$ in terms of $e_{G}(v,w)$ writing it as $d_G(w)=\sum_{w \in \Gamma_G(v)} e_{G}(v,w)$, i.e., the total of edges whose one of its ends is $v$.

\subsection{Triangles and cherries} \label{sec:trianglesandcherries}

A triangle in a graph $G=(V_G,E_G)$ is a subset $\{u,v,w\}\in \binom{V_G}{3}$ with $u\sim_G v\sim_G w\sim_G u$. We denote the number of triangles contained in a graph $G$ by $\Delta_G$. For a fixed vertex $v \in V_G$, we denote the number of triangles sharing at least the common vertex $v$ by $\Delta_G(v)$.

 A {\em cherry}, or path of length two, is an element $(v,\{u,w\})\in V_G\times \binom{V_G}{2}$ with $u\sim_G v\sim_G w$, or a pair $(v,u)\in V_G^2$ with $e_G(u,v)\geq 2$.  We let $C_G$ denote the number of cherries in $G$, counted according to edge multiplicities:
\[C_G:= \sum_{v\in V_G}\,\binom{d_G(v)}{2}.\]

\subsection{Clustering coefficients}\label{sec:clustering} We assume $G=(V_G,E_G)$ is a graph where $|V_G|>0$ and all vertices have degree at least $2$. 
\begin{definition}[Local clustering coefficient in $v$]
Given a vertex $v \in G$, the local clustering coefficient at $v$, is
\[ \Co_G(v) = \frac{ \Delta_G(v)}{\binom{d_{G}(v)}{2}}.\]\end{definition}
Notice that $0\leq \Co_G(v)\leq 1$ always, since there can be at most one triangle formed by $v$ and a pair of its neighbors. In probabilistic terms, $\Co_G(v)$ measures the probability that a pair of random neighbors of $v$ form an edge, ie. how likely it is that ``two friends of $v$ are also each other's friends."

The two coefficients for the graph $G$ are as follows.
\begin{definition}[Local and Global Clustering Coefficients]\label{LC}The local clustering coefficient of $G$ is defined as
  \[ \Coloc_G:= \frac{\sum_{v\in V_G}\,\Co_G(v)}{|V_G|},\]
whereas the global coefficient is 
  \[ \Coglo_G:= 3\times\frac{\Delta_G}{C_G}.
  \]
\end{definition}


\begin{obs} For our purposes there is no significant difference between taking or not multiple edges into account in the above definitions. The key point is that our model allows at most $m$ edges between two vertices. Thus all bounds considering multiple edges may be carried over to the other case (and vice versa) at the cost of losing constant factors.

\end{obs}

\subsection{Sequences of graphs} We will often consider the sequence of graphs $\{G_t\}_{t}$ defined by the Holme-Kim model, which is indexed by a discrete time parameter $t\geq 0$. When considering this sequence, we replace the subscript $G_t$ with $t$ in our graph notation, so that $V_t:=V_{G_t}$, $E_t:=E_{G_t}$, etc. Given a sequence of numerical values $\{x_t\}_{t\geq 0}$ depending on $t$, we will let $\Delta\,x_t:=x_{t}-x_{t-1}$.

\subsection{Asymptotics} We will use the Landau $o/O/\Theta$ notation at several points of our discussion. This always pressuposes some asymptotics as a parameter $n$ or $t$ of interest goes to $+\infty$. Just which parameter this is will always be clear from context.

\subsection{Martingale concentration inequalities}

Recall that a supermartingale $(M_n,\sF_n)_{n\in\N}$ consists of a filtration $\sF_0\subset\sF_1\subset\sF_3\subset \dots $ and a family $\{M_n\}_{n\in\N}$ of integrable random variables where, for each $n\in\N$, $M_n$ is $\sF_n$-measurable and $\mathbb{E}[M_{n+1}\mid \sF_n]\leq M_n$. If $(-M_n,\sF_n)_{n\in\N}$ is also a supermartingale, we say $(M_n,\sF_n)_{n\in\N}$ is a martingale. We recall two well-known concentration inequalities for martingales. 

\begin{teorema}[Azuma-H\"{o}ffding Inequality - \cite{ChungLu}] Let $(M_n,\sF_n)_{n \ge 1}$ be a (super)martingale satisfying
\[ 
\lvert M_{i+1} - M_i \rvert \le a_i
\]
Then, for all $\l > 0 $ we have
\[
\P \left( M_n - M_0 > \l \right) \le \exp\left( -\frac{\l^2}{\sum_{i=1}^n a_i^2} \right).
\]
\end{teorema}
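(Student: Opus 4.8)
The plan is to prove this by the exponential-moment (Chernoff) method combined with a conditional version of Hoeffding's lemma, applied increment by increment along the filtration. It suffices to treat the supermartingale case, the martingale case being a special instance. Fix a parameter $s>0$. Since the increments are bounded, $e^{s(M_n-M_0)}$ is a bounded, hence integrable, nonnegative random variable, so Markov's inequality gives
\[
\mathbb{P}(M_n-M_0>\lambda)\le e^{-s\lambda}\,\mathbb{E}\big[e^{s(M_n-M_0)}\big].
\]
It is therefore enough to bound the moment generating function $\mathbb{E}\big[e^{s(M_n-M_0)}\big]$ and then optimise over $s$.

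The key lemma I would prove is the following conditional estimate: if $\mathcal{G}\subset\sF_n$ is a sub-$\sigma$-algebra and $Y$ is an integrable random variable with $|Y|\le a$ and $\mathbb{E}[Y\mid\mathcal{G}]\le 0$ almost surely, then $\mathbb{E}\big[e^{sY}\mid\mathcal{G}\big]\le e^{s^2a^2/2}$. This follows from the convexity of $y\mapsto e^{sy}$ on $[-a,a]$: bounding $e^{sy}\le \frac{a-y}{2a}e^{-sa}+\frac{a+y}{2a}e^{sa}$ for $|y|\le a$, taking conditional expectation, and observing that the resulting bound $\frac{a-\mathbb{E}[Y\mid\mathcal{G}]}{2a}e^{-sa}+\frac{a+\mathbb{E}[Y\mid\mathcal{G}]}{2a}e^{sa}$ is nondecreasing in $\mathbb{E}[Y\mid\mathcal{G}]$, so it is maximised at $\mathbb{E}[Y\mid\mathcal{G}]=0$, where it equals $\cosh(sa)\le e^{s^2a^2/2}$.

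Next I would apply this to the increments $Y_i:=M_{i+1}-M_i$, which satisfy $|Y_i|\le a_i$ and, by the supermartingale property, $\mathbb{E}[Y_i\mid\sF_i]\le 0$. Peeling off the last increment with the tower property, and using that $e^{s(M_{n-1}-M_0)}$ is $\sF_{n-1}$-measurable,
\[
\mathbb{E}\big[e^{s(M_n-M_0)}\big]=\mathbb{E}\Big[e^{s(M_{n-1}-M_0)}\,\mathbb{E}\big[e^{sY_{n-1}}\mid\sF_{n-1}\big]\Big]\le e^{s^2a_{n-1}^2/2}\,\mathbb{E}\big[e^{s(M_{n-1}-M_0)}\big].
\]
Iterating this down to $M_0$ (telescoping $M_n-M_0$ into its increments) yields $\mathbb{E}\big[e^{s(M_n-M_0)}\big]\le \exp\big(\tfrac{s^2}{2}\sum_i a_i^2\big)$.

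Finally, combining with the Markov bound gives $\mathbb{P}(M_n-M_0>\lambda)\le \exp\big(-s\lambda+\tfrac{s^2}{2}\sum_i a_i^2\big)$, and choosing $s=\lambda/\sum_i a_i^2$ produces the claimed exponential decay (the precise constant in the denominator being a matter of the normalisation adopted in Hoeffding's lemma). I expect the only step needing genuine care to be the conditional Hoeffding estimate in the supermartingale rather than martingale case — i.e. verifying that $\mathbb{E}[Y\mid\mathcal{G}]\le 0$ already suffices for the MGF bound — together with the measurability/integrability bookkeeping in the tower-property iteration; beyond that the argument is entirely classical.
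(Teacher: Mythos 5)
Your argument is the standard Chernoff-plus-conditional-Hoeffding proof and it is correct, including the point you flag as delicate: for $s>0$ the bound $\frac{a-\mu}{2a}e^{-sa}+\frac{a+\mu}{2a}e^{sa}$ is indeed nondecreasing in $\mu$, so $\mathbb{E}[Y\mid\mathcal{G}]\le 0$ suffices and the supermartingale case goes through. There is nothing to compare against in the paper itself: the authors state this inequality as a quoted result from \cite{ChungLu} and give no proof, so your write-up supplies exactly the classical textbook argument that the citation points to.

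One substantive remark about the constant. Your method yields
\[
\P\left(M_n-M_0>\l\right)\le \exp\left(-\frac{\l^2}{2\sum_i a_i^2}\right),
\]
and this factor of $2$ is not a removable artefact of normalisation: the bound as displayed in the paper, with $\sum_i a_i^2$ alone in the denominator, is false in general. For a single step with $M_1-M_0=\pm a$ equally likely and $\l$ slightly below $a$, the left side is $1/2$ while $\exp(-\l^2/a^2)\approx e^{-1}<1/2$; the version with $2a^2$ gives $\approx e^{-1/2}>1/2$ and is fine. So the paper's statement is missing the factor $2$ (the form in Chung--Lu has it), and your proof establishes the correct standard form. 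Aside from this, only minor bookkeeping separates you from a complete write-up: the increments should be indexed $i=0,\dots,n-1$ (or the sum adjusted accordingly), which is an indexing slip already present in the paper's statement rather than an issue with your argument.
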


\begin{teorema}[Freedman's Inequality - \cite{freedman1975}]\label{teo:azumaquadratic} Let $(M_n, \mathcal{F}_n)_{n \ge 1}$ be a (super)martingale. Write 
\[
V_n := \sum_{k=1}^{n-1} \mathbb{E} \left[(M_{k+1}-M_k)^2\middle|\mathcal{F}_k \right]
\]
and suppose that $M_0 = 0$ and 
\[ 
\lvert M_{k+1} - M_k \rvert \le R, \textit{ for all }k.
\]
Then, for all $\l > 0 $ we have
\[
\P \left(   M_n \ge \l, V_n \le \s^2, \textit{ for some }n\right) \le \exp\left( -\frac{\l^2}{2\s^2 + 2R\l /3} \right). 
\]
\end{teorema}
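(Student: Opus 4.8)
The plan is to prove Freedman's inequality by the classical exponential-supermartingale method, taking care of the two features that distinguish this statement from a textbook version: the hypothesis is only one-sided (a supermartingale), and the conclusion is uniform over time (``for some $n$''). I would begin by isolating the scalar estimate behind everything: for $\theta>0$ put $\phi(\theta):=(e^{\theta R}-1-\theta R)/R^2$; since $x\mapsto(e^{\theta x}-1-\theta x)/x^2$ is non-decreasing on $\mathbb{R}$ one has $e^{\theta x}\le 1+\theta x+\phi(\theta)x^2$ for every $x\le R$. Consequently, for any sub-$\sigma$-algebra $\mathcal{G}$ and any random variable $X$ with $X\le R$ a.s.\ and $\mathbb{E}[X\mid\mathcal{G}]\le 0$, conditioning and then applying $1+y\le e^y$ gives
\[
\mathbb{E}\big[e^{\theta X}\mid\mathcal{G}\big]\ \le\ 1+\theta\,\mathbb{E}[X\mid\mathcal{G}]+\phi(\theta)\,\mathbb{E}[X^2\mid\mathcal{G}]\ \le\ \exp\!\big(\phi(\theta)\,\mathbb{E}[X^2\mid\mathcal{G}]\big).
\]
Note this uses only the one-sided bound $X\le R$ (slightly less than assumed) together with the supermartingale inequality.

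For each fixed $\theta>0$ I would then define $Z_n:=\exp\!\big(\theta M_n-\phi(\theta)\,V_n\big)$, where $V_n$ denotes the predictable quadratic variation $\sum_{k\le n}\mathbb{E}[(M_k-M_{k-1})^2\mid\mathcal{F}_{k-1}]$; this agrees with the sum in the statement up to the single term $\mathbb{E}[M_1^2\mid\mathcal{F}_0]$, which vanishes in all our applications and in any case only helps. Since $V_{n+1}$ is $\mathcal{F}_n$-measurable and $V_{n+1}=V_n+\mathbb{E}[(M_{n+1}-M_n)^2\mid\mathcal{F}_n]$, applying the scalar estimate with $\mathcal{G}=\mathcal{F}_n$ and $X=M_{n+1}-M_n$ yields $\mathbb{E}[Z_{n+1}\mid\mathcal{F}_n]\le Z_n$, so $(Z_n,\mathcal{F}_n)$ is a non-negative supermartingale with $Z_0=1$.

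To turn this into the uniform-in-time bound I would pass through stopping times rather than fix an $n$. Let $A:=\{\,M_n\ge\lambda\text{ and }V_n\le\sigma^2\text{ for some }n\,\}$, and set $N:=\inf\{n:M_n\ge\lambda\}$ and $\rho:=\inf\{n:V_{n+1}>\sigma^2\}$, both stopping times. Using that $V$ is non-decreasing, $A=\{N<\infty,\ V_N\le\sigma^2\}$, and on $A$ one has $V_{n+1}\le V_N\le\sigma^2$ for every $n<N$, hence $\rho\ge N$. Optional stopping applied to $Z$ at the bounded time $N\wedge\rho\wedge m$ gives $\mathbb{E}[Z_{N\wedge\rho\wedge m}]\le Z_0=1$; on $A\cap\{N\le m\}$ this stopped value equals $Z_N\ge\exp(\theta\lambda-\phi(\theta)\sigma^2)$, so $\mathbb{P}(A\cap\{N\le m\})\le\exp(-\theta\lambda+\phi(\theta)\sigma^2)$, and letting $m\to\infty$ yields $\mathbb{P}(A)\le\exp(-\theta\lambda+\phi(\theta)\sigma^2)$ for every $\theta>0$.

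Finally I would optimize over $\theta$. Comparing power series gives the elementary inequality $e^x-1-x\le\tfrac{x^2/2}{1-x/3}$ for $0\le x<3$ (it reduces to $j!\ge 2\cdot 3^{\,j-2}$ for $j\ge 2$), so $\phi(\theta)\le\tfrac{\theta^2/2}{1-\theta R/3}$ for $0<\theta<3/R$. Choosing $\theta:=\lambda/(\sigma^2+R\lambda/3)$ one has $1-\theta R/3=\sigma^2/(\sigma^2+R\lambda/3)$, hence $\phi(\theta)\sigma^2\le\tfrac12\theta^2(\sigma^2+R\lambda/3)=\tfrac12\theta\lambda$, and the exponent collapses to $-\theta\lambda+\phi(\theta)\sigma^2\le-\tfrac12\theta\lambda=-\lambda^2/(2\sigma^2+2R\lambda/3)$, the asserted bound. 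The step I expect to be delicate is the third one: checking that $N,\rho$ are genuine stopping times, that monotonicity of $V$ legitimately replaces the existential ``for some $n$'' by the single stopped value $Z_N$, and that the predictable variation has been indexed so that $Z_n$ is honestly $\mathcal{F}_n$-measurable and a supermartingale. Steps one, two and four are routine calculus once the relevant convexity/power-series inequalities are in place.
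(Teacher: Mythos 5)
Your proof is correct and follows essentially the same route as the paper's own (suppressed) argument: the exponential supermartingale $\exp(\theta M_n-\phi(\theta)V_n)$ built from the scalar bound $e^{\theta x}\le 1+\theta x+\phi(\theta)x^2$ for $x\le R$, followed by optional stopping at the first time $M$ crosses $\lambda$. The one substantive divergence is the final optimization: your series bound $e^x-1-x\le \frac{x^2/2}{1-x/3}$ with $\theta=\lambda/(\sigma^2+R\lambda/3)$ lands exactly on the stated denominator $2\sigma^2+2R\lambda/3$, whereas the paper's exact minimizer combined with $\log(1+x)\ge 2x/(2+x)$ only reaches $2\sigma^2+R\lambda$, so your version is in fact the one that yields the constant as stated; your flag about whether $V_n$ should include the first increment is also well taken, since the standard form of Freedman's inequality (and the supermartingale property of $Z_n$ starting from $Z_0=1$) requires it.
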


\ignore{\begin{proof} Without lost of generality, we assume $R=1$. If it is not, another martingale can be defined just dividing by $R$. To simply our writing, denote by $D_k$ the difference $M_{k}-M_{k-1}$ and put $\tau = \inf \{n : M_n > \l \}$. The proof will require the inequality below, which we just state since it may be derived via straightforward calculus arguments.
\begin{equation}\label{ineq:freed}
e^{sx} \le 1+sx +(e^s-1-s)x^2,
\end{equation}
for all $s>0$ and $x \le 1$. The proof is essentially the follow
\begin{claim} For all $s>0$,
\[
\mathbb{E}\left[ e^{sD_{k+1}}  \middle | \mathcal{F}_k\right] \le e^{(e^s - 1-s)\mathbb{E} \left[D_{k+1}^2\middle|\mathcal{F}_k \right]}.
\] 
\end{claim}
\begin{claimproof}
By hypothesis $\lvert M_{k+1} - M_k \rvert \le 1$. Thus, (\ref{ineq:freed}) gives us
\[
e^{sD_{k+1}} \le 1+sD_{k+1} +(e^s - 1-s)D_{k+1}^2.
\]
Taking the conditional expectation on both sides and using that $M_n$ is a (super)martingal, we have
\[
\mathbb{E}\left[ e^{sD_{k+1}}  \middle | \mathcal{F}_k\right] \le 1 + (e^s - 1-s)\mathbb{E} \left[D_k^2\middle|\mathcal{F}_{k-1} \right] \le e^{(e^s - 1-s)\mathbb{E} \left[D_{k+1}^2\middle|\mathcal{F}_k \right]}
\]
proving the claim.
\end{claimproof}

Now, let $S_n$ be the following 
\[
S_n := e^{sM_n - (e^s-1-s)V_n} = S_{n-1}e^{sD_n - (e^s - 1-s)\mathbb{E} \left[D_n^2\middle|\mathcal{F}_{n-1} \right]}
\]
and observe that our claim guarantees $(S_n)_n$ is a positive supermartingale. Thus, by the Optional Stopping Theorem,
\[
\mathbb{E}\left[ S_{\tau \wedge n} \right] \le 1, \text{ for all }n
\]
and by Fatou's Lemma,
\[
\int_{\{\tau < \infty\}} S_{\tau} \le \liminf_{n \rightarrow \infty} \int_{\{\tau < \infty\}} S_{\tau \wedge n} \le 1.
\]
Now, let $A$ denotes the event we are interest in and observe 
\[
S_{\tau}\mathbb{1}_{A} \ge \mathbb{1}_{A}e^{s\lambda - (e^s-1-s)\sigma^2},
\]
which implies
\begin{equation}\label{ineq:probs}
\mathbb{P}(A) \le e^{-s\lambda + (e^s-1-s)\sigma^2}.
\end{equation}
Minimizing on $s$ the above inequality, we obtain
\[
s = \log\left(\frac{\lambda + \sigma^2}{\sigma^2}\right).
\]
Returning on (\ref{ineq:probs}), we have
\[
\mathbb{P}(A) \le \left( \frac{\sigma^2}{\lambda+\sigma^2}\right)^{\lambda+\sigma^2}e^{\lambda} \le \exp\left\lbrace - \frac{-\lambda^2}{2\sigma^2 + \lambda} \right\rbrace,
\]
since $\log(1+x) \ge 2x/(2+x)$, which concludes the proof.
\end{proof}}

\section{Formal definition of the process}\label{sec:HKdef}

In this section we give a more formal definition of the Holme-Kim process (compare with Section \ref{sec:thehkmodel}). 

The model has two parameters: a positive integer number $m \ge 2$ and a real number $p\in [0,1]$. It produces a graph sequence $\{ G_t \}_{t\ge 1}$ which is obtained inductively according to the growth rule we describe below. 

\par{\sc Initial state.} The initial graph $G_1$, which will be taken as the graph with vertex set $V_1=\{1\}$ and  a single edge, which is a self-loop. 

\par{\sc Evolution.} For $t>1$, obtain $G_{t+1}$ from $G_t$ adding to it a vertex ${t+1}$ and $m$ edges between $t+1$ and vertices $Y^{(i)}_{t+1}\in V_t$, $1\leq i\leq m$. These vertices are chosen as follows. Let $\sF_t$ be the $\sigma$-field generated by all random choices made in our construction up to time $t$. Assume we are given i.i.d. random variables  $(\xi_{t+1}^{(i)})$ independent from $\sF_t$. We define:
\[
\mathbb{P}\left( Y^{(1)}_{t+1} = u \middle | \sF_t \right) = \frac{d_t(u)}{2mt},
\]
which means the first choice of vertex is always made using the \textit{preferential attachment} mechanism. The next $m-1$ choices $Y^{(i)}_{t+1}$, $2\leq i\leq m$, are made as follows: let $\sF_t^{(i-1)}$ be the $\sigma$-field generated by $\sF_t$ and all subsequent random choices made in choosing $Y^{(j)}_{t+1}$ for $1\leq j\leq i-1$. Then:
\[
\mathbb{P}\left( Y^{(i)}_{t+1} = u \middle |\xi_{t+1}^{(i)}=x,\sF^{(i-1)}_t \right) = \left\{ 
\begin{array}{ll}
		\frac{d_t(u)}{2mt}  & \mbox{if } x = 0,  \\ \\
		\frac{e_t(Y^{(i-1)}_{t+1},u)}{d_t(Y^{(i-1)}_{t+1})} & \mbox{if } x = 1 \mbox{ and } u\in \Gamma_t(Y^{(i-1)}_{t+1}), \\ \\
		0 & \mbox{otherwise. }
	\end{array}
\right.
\]
In words, for each choice for the $m-1$ end points, we flip an independent coin of parameter $p$ and decide according to it which mechanism we use to choose the end point. With probability $p$ use the \textit{triad formation} mechanism, i.e., we choose the end point among the neighbors of the previously chosen vertex $Y^{(i-1)}_{t+1}$. With probability $1-p$, we make a fresh choice from $V_t$ using the \textit{preferential attachment} mechanism. In this sense, if $\xi_t^{(i)} = 1$ we say we have taken a \textit{TF-step}. Otherwise, we say a \textit{PA-step} was performed.

\section{Technical estimates for vertex degrees}\label{sec:degrees}

In this section we collect several results on vertex degrees. Subsection \ref{sec:increments} describes the probability of degree increments in a single step. In Subsection \ref{sec:maximumdegrees} we obtain upper bounds on all degrees. Some of these results are fairly technical and may be skipped in a first reading.

\subsection{Degree increments}\label{sec:increments} We begin the following simple lemma.

\begin{lema}\label{lema:probdeltadt} For all $k \in \{ 1,..,m\}$, there exists positive constants $c_{m,p,k}$ such that
\[
\P\left(\Delta d_t(v) = k \middle | G_t \right) = c_{m,p,k}\frac{d_t^k(v)}{t} + O\left(\frac{d^{k+1}_t(v)}{t^{k+1}}\right).
\]
In particular, for $k=1$ we have $c_{m,p,1} = 1/2$.
\end{lema}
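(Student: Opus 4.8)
The plan is to compute $\P(\Delta d_t(v) = k \mid G_t)$ by decomposing the event $\{\Delta d_t(v) = k\}$ according to which of the $m$ edges added at step $t+1$ land on $v$, and whether each landing is via a PA-step or a TF-step. The degree of $v$ increases by $k$ exactly when exactly $k$ of the $m$ new edges attach to $v$. Since a single PA-step attaches to $v$ with probability $d_t(v)/(2mt) = \Theta(d_t(v)/t)$, and a TF-step attaches to $v$ with probability at most $1$ (it attaches to a neighbor of the previously chosen vertex), the dominant contribution to the probability of $k$ attachments will come from configurations that produce $k$ attachments using the fewest possible "cheap" events. The key observation is that a TF-step can attach to $v$ only if the immediately preceding edge attached to a neighbor of $v$ — so producing an attachment to $v$ via TF still requires a preceding attachment event "near" $v$, which itself costs a factor $\Theta(d_t(w)/t)$ for some $w$; summing over neighbors $w$ of $v$ gives a factor $\Theta(d_t(v)/t)$ overall. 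Hence every one of the $k$ attachments to $v$ costs a factor $\Theta(d_t(v)/t)$, giving the leading term $c_{m,p,k} d_t(v)^k / t^k$, while any configuration that is "wasteful" (e.g. an extra PA-step that misses $v$ but whose randomness is still constrained, or over-counting) contributes only to the $O(d_t(v)^{k+1}/t^{k+1})$ error.

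First I would fix notation for the $m$ sequential choices $Y^{(1)}_{t+1},\dots,Y^{(m)}_{t+1}$ and the coin flips $\xi^{(i)}_{t+1}$, and write
\[
\P\left(\Delta d_t(v) = k \mid G_t\right) = \sum_{\substack{S \subseteq [m] \\ |S| = k}} \P\left(\{Y^{(i)}_{t+1} = v\} \text{ for } i \in S,\ \{Y^{(i)}_{t+1} \neq v\} \text{ for } i \notin S \mid G_t\right),
\]
and then expand each summand by conditioning successively on $\sF_t^{(i-1)}$ and on the coins $\xi^{(i)}_{t+1}$. The probability that $Y^{(i)}_{t+1} = v$ given the history is $(1-p)\frac{d_t(v)}{2mt}$ if $Y^{(i-1)}_{t+1}$ is not a neighbor of $v$, and $(1-p)\frac{d_t(v)}{2mt} + p\frac{e_t(Y^{(i-1)}_{t+1},v)}{d_t(Y^{(i-1)}_{t+1})}$ if it is (with $i=1$ always pure PA). The case $k=1$ is immediate: exactly one edge is a pure PA-step landing on $v$ while the others miss; the leading order is $\sum_{i=1}^m \frac{d_t(v)}{2mt}\cdot(1+o(1)) = \frac{d_t(v)}{2t}(1+o(1))$ for the PA contributions, and one checks the TF contribution to $k=1$ is also $\Theta(d_t(v)/t)$ but — crucially — when we only add $1$ to the degree via a TF-step, the preceding vertex must be a neighbor of $v$, and $\sum_{w \sim v} \frac{d_t(w)}{2mt}\cdot\frac{e_t(w,v)}{d_t(w)} = \frac{1}{2mt}\sum_{w\sim v} e_t(w,v) = \frac{d_t(v)}{2mt}$, so these also scale like $d_t(v)/t$; summing PA and TF contributions across positions and verifying the combinatorics gives $c_{m,p,1} = 1/2$ (the TF and PA leading coefficients combine so that $p$ drops out). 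For general $k$, I would argue that to get $k$ attachments to $v$, each attachment — whether PA or the "trigger" PA-step preceding a TF attachment — contributes an independent factor $\Theta(d_t(v)/t)$, and any sharing of triggers between several TF-steps or other degeneracies only reduces the count, hence lands in the error term; collecting the finitely many surviving terms defines the constant $c_{m,p,k} > 0$.

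The main obstacle I anticipate is bookkeeping the TF-steps carefully: because a TF-step attaches to a neighbor of the \emph{previous} vertex, consecutive TF-steps form chains, and I must show that a chain of TF-steps ending at $v$ still requires one "expensive" PA-event per unit of degree gained at $v$ — i.e. I cannot get two edges onto $v$ for the price of one factor $d_t(v)/t$. Concretely, if positions $i$ and $i+1$ are both TF-steps both landing on $v$, then $Y^{(i-1)}_{t+1}$ must be a neighbor of $v$ \emph{and} $Y^{(i)}_{t+1} = v$, and the product of the two TF-probabilities is $\frac{e_t(Y^{(i-1)}_{t+1},v)}{d_t(Y^{(i-1)}_{t+1})}\cdot\frac{e_t(v,v) \text{ or } e_t(v,\cdot)}{d_t(v)}$ — one must check this does not exceed $O(d_t(v)/t)\cdot O(1)$ in a way that would beat two independent factors; the resolution is that the \emph{first} edge of such a chain is forced to land in $\Gamma_t(v)$, costing $\Theta(d_t(v)/t)$, and each subsequent TF-hop onto $v$ requires $v$ to have been the previous target, which re-imposes the same constraint — so the chain of length $\ell$ landing $\ell$ edges on $v$ costs $\Theta((d_t(v)/t)^{\ell})$ overall up to the usual $O(d^{k+1}/t^{k+1})$ corrections (here one also uses $e_t(w,v) \le m$ to bound numerators). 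Once this "each attachment costs one factor $d_t(v)/t$" principle is established uniformly over the finitely many partitions of the $k$ attachments into PA-singletons and TF-chains, summing the finitely many leading terms produces the stated constant and the error bound follows by absorbing all lower-order terms, using $d_t(v) \le 2mt$ to control the geometric-type series of corrections.
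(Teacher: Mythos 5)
Your proposal is correct and follows essentially the same route as the paper: the paper first proves by induction that each choice $Y^{(i)}_{t+1}$ marginally hits $v$ with probability exactly $\frac{d_t(v)}{2mt}$ — using precisely your identity $\sum_{w\sim v}\frac{d_t(w)}{2mt}\cdot\frac{e_t(w,v)}{d_t(w)}=\frac{d_t(v)}{2mt}$, which is how $p$ drops out — and then multiplies transition probabilities over the $m$ positions (noting a repeat hit on $v$ can only come from a PA-step), treating $k>1$ by the same $\binom{m}{k}$ counting you sketch, with all corrections absorbed into $O\left(\frac{d_t^{k+1}(v)}{t^{k+1}}\right)$. One small caution: your sentence attributing the full $\frac{d_t(v)}{2t}$ leading term to the ``PA contributions'' alone is misphrased (PA alone would carry a factor $1+(m-1)(1-p)$); it is the combined PA-plus-TF marginal per position that equals $\frac{d_t(v)}{2mt}$, as your subsequent parenthetical correctly states.
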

\begin{proof} We begin proving the following claim involving the random variables $Y^{(i)}_t$
\begin{claim} For all $i \in \{ 0, 1, 2,\ldots, m \}$, 
\begin{equation}\label{eq:choicestep}
\P\left( Y^{(i)}_{t+1} = v \middle| G_t \right) = \frac{d_t(v)}{2mt}.
\end{equation}
\end{claim}
\begin{claimproof} The proof follows by induction on $i$. For $i=1$ we have nothing to do. So, suppose the claim holds for all choices before $i-1$. Then,
\begin{equation}\label{eq:yit}
\begin{split}
\P\left( Y^{(i)}_{t+1} = v \middle| G_t \right) & = \P\left( Y^{(i)}_{t+1}= v, Y^{(i-1)}_{t+1} = v  \middle| G_t \right) + \P\left( Y^{(i)}_{t+1} = v, Y^{(i-1)}_{t+1} \neq v  \middle| G_t \right)\\
& = \frac{(1-p)}{4m^2}\frac{d^2_t(v)}{t^2} + \P\left( Y^{(i)}_{t+1}= v, Y^{(i-1)}_{t+1} \neq v  \middle| G_t \right).
\end{split}
\end{equation}
For the first term on the \textit{r.h.s} the only way we can choose $v$ again is following a \textit{PA-step} and then choosing $v$ according to preferential attachment rule. This means
\begin{equation}\label{eq:choice}
 \P\left( Y^{(i)}_{t+1}= v, Y^{(i-1)}_{t+1} = v  \middle| G_t \right) = \frac{(1-p)}{4m^2}\frac{d^2_t(v)}{t^2}.
\end{equation}
For the second term, we divide it in two sets, whether the vertex chosen at the previous choice is a neighbor of $v$ or not.   
\begin{equation*}
\begin{split}
\P\left( Y^{(i)}_{t+1}= v, Y^{(i-1)}_{t+1} \neq v  \middle| G_t \right) & = \sum_{u \notin \Gamma_{G_t}(v)}\P\left( Y^{(i)}_{t+1} = v, Y^{(i-1)}_{t+1}=u  \middle| G_t \right)\\
& \quad + \sum_{u \in \Gamma_{G_t}(v)}\P\left( Y^{(i)}_{t+1}= v, Y^{(i-1)}_{t+1}=u  \middle| G_t \right) \\
& = (1-p)\frac{d_t(v)}{2mt}\left( \sum_{u \notin \Gamma_{G_t}(v)} \frac{d_t(u)}{2mt}\right) \\
& \quad + p\left( \sum_{u \in \Gamma_{G_t}(v)} \frac{e_{G_t}(u,v)}{d_t(u)}\frac{d_t(u)}{2mt}\right) \\
& \quad + \frac{(1-p)}{2m}\frac{d_t(v)}{t}\left( \sum_{u \in \Gamma_{G_t}(v)} \frac{d_t(u)}{2mt}\right) \\
& = \frac{d_t(v)}{2mt} - \frac{(1-p)}{4m^2}\frac{d^2_t(v)}{t^2}.
\end{split}
\end{equation*}
We used our inductive hypothesis and that
\[
\sum_{u \in \Gamma_{G_t}(v)} \frac{d_t(u)}{2mt} + \sum_{u \notin \Gamma_{G_t}(v)} \frac{d_t(u)}{2mt} = 1- \frac{d_t(v)}{2mt}
\]
Returning to (\ref{eq:yit}) we prove the claim.
\end{claimproof}

We will show the particular case $k=1$ since we have particular interest in the value of $c_{m,p,1}$ and point out how to obtain the other cases. We begin noticing that the process of choices is by definition a homogeneous Markovian process. This means to evaluate the probability of a vertex increasing its degree by exactly one, the case $k=1$, we just need to know the probabilities of transition. In this way, use the notation $\mathbb{P}_t$ to denote the measure conditioned on $G_t$ and go to the computation of the probabilities of transition. We start by the hardest one.
\begin{equation}\label{eq:npras}
\begin{split}
\mathbb{P}_t\left( Y_{t+1}^{(i+1)}=v \middle | Y_{t+1}^{(i)}\neq v \right) & =  \sum_{u \in \Gamma_{G_t}(v)}\mathbb{P}_t\left( Y_{t+1}^{(i+1)}=v \middle | Y_{t+1}^{(i)} = u \right)\mathbb{P}_t\left( Y_{t+1}^{(i)} = u\middle | Y_{t+1}^{(i)} \neq v \right)\\
&  \quad + \sum_{u \notin \Gamma_{G_t}(v)}\mathbb{P}_t\left( Y_{t+1}^{(i+1)}=v \middle | Y_{t+1}^{(i)} = u \right)\mathbb{P}_t\left( Y_{t+1}^{(i)} = u\middle | Y_{t+1}^{(i)} \neq v \right)\\
\end{split}
\end{equation}
When $u \in \Gamma_{G_t}(v)$, we can choose $v$ taking any of the two steps. This implies the equation below
\begin{equation}
\begin{split}
\mathbb{P}_t\left( Y_{t+1}^{(i+1)}=v \middle | Y_{t+1}^{(i)} = u \right) = (1-p)\frac{d_t(v)}{2mt}+ p\frac{e_t(u,v)}{d_t(u)},
\end{split}
\end{equation}
but when $u \notin \Gamma_{G_t}(v)$ the only way we can choose $v$ is following a \textit{PA-step}, which implies that
\begin{equation}
\begin{split}
\mathbb{P}_t\left( Y_{t+1}^{(i+1)}=v \middle | Y_{t+1}^{(i)} = u \right) = (1-p)\frac{d_t(v)}{2mt}.
\end{split}
\end{equation}
We also notice that the equation below holds, since $u \neq v$ and our claim is true
\begin{equation}
\begin{split}
\mathbb{P}_t\left( Y_{t+1}^{(i)} = u\middle | Y_{t+1}^{(i)} \neq v \right) &= \frac{\mathbb{P}_t\left( Y_{t+1}^{(i)} = u \right)}{\mathbb{P}_t\left( Y_{t+1}^{(i)} \neq v \right)} \stackrel{\textit{Claim}}{=} \frac{d_t(u)}{2mt\mathbb{P}_t\left( Y_{t+1}^{(i)} \neq v \right)}.
\end{split}
\end{equation}
And the same \textit{Claim} also implies that
\begin{equation}
\begin{split}
\frac{1}{\mathbb{P}_t\left( Y_{t+1}^{(i)} \neq v \right)} = \frac{1}{1-\frac{d_t(v)}{2mt}} = 1 + \sum_{n=1}^{\infty}\left(\frac{d_t(v)}{2mt} \right)^n.
\end{split}
\end{equation}
Combining the last three equations to (\ref{eq:npras}), we are able to get
\begin{equation}\label{eq:pns}
\begin{split}
\mathbb{P}_t\left( Y_{t+1}^{(i+1)}=v \middle | Y_{t+1}^{(i)}\neq v \right) & = (1-p)\frac{d_t(v)}{2mt} + p\frac{d_t(v)}{2mt}\left( 1 + \sum_{n=1}^{\infty}\left(\frac{d_t(v)}{2mt} \right)^n \right) \\
& = \frac{d_t(v)}{2mt} + O\left( \frac{d^2_t(v)}{t^2} \right).
\end{split}
\end{equation}
If we chose $v$ at the previous choice, the only way we select it again is following a \textit{PA-step}, this means that
\begin{equation}\label{eq:pss}
\begin{split}
\mathbb{P}_t\left( Y_{t+1}^{(i+1)}=v \middle | Y_{t+1}^{(i)} = v \right) & = (1-p)\frac{d_t(v)}{2mt}.
\end{split}
\end{equation}
From these two probabilities of transition we may obtain the other ones. 

To compute the probability of $\{\Delta d_t(v) =1\}$ given $G_t$ we may split it in $m$ possible ways to increase $d_t(v)$ by exactly one. Each possible way has an index $i \in \{1,...,m\}$ meaning the step we chose $v$ and then we must avoid it at the other $m-1$ choices. This means that each of these $m$ ways has a probability similar to the expression below
\[
\left( 1- \frac{d_t(v)}{2mt} - O\left( \frac{d^2_t(v)}{t^2} \right)\right)^{i-1}\left( \frac{d_t(v)}{2mt} + O\left( \frac{d^2_t(v)}{t^2} \right) \right)\left( 1- \frac{d_t(v)}{2mt} - O\left( \frac{d^2_t(v)}{t^2} \right)\right)^{m-i}
\]
which implies that
\[
\P\left(\Delta d_t(v) = 1 \middle | G_t \right) = \frac{d_t(v)}{2t} + O\left(\frac{d^{2}_t(v)}{t^{2}}\right).
\]

The cases $k> 1$ are obtained in the same way, considering the $\binom{m}{k}$ ways of increase $d_t(v)$ by $k$.
\end{proof}
\begin{obs}\label{obs:bigonotation} We must notice that term $O\left(\frac{d^{k+1}_t(v)}{t^{k+1}}\right)$ given by the Lemma actually is a statement stronger than the real big-O notation. Since the Lemma's proof implies the existence of a positive constant $\tilde{c}_{m,p,k}$ such that 
\[
O\left(\frac{d^{k+1}_t(v)}{t^{k+1}}\right) \le \tilde{c}_{m,p,k}\frac{d^{k+1}_t(v)}{t^{k+1}},
\]
for all vertex $v$ and time $t$.
\end{obs}
\subsection{Upper bounds on vertex degrees}\label{sec:maximumdegrees}
To control the number of cherries, $C_t$, we will need upper bounds on vertex degrees. The bound is obtained applying the Azuma-Hoffding inequality to the degree of each vertex, which is a martingale after normalizing by the quantity defined below.
\begin{equation}\label{ineq:phi}
\phi(t) := \prod_{s=1}^{t-1} \left( 1 + \frac{1}{2s} \right).
\end{equation}
A fact about $\phi(t)$ will be useful: there exists positive constants $b_1$ and $b_2$ such that
\[
b_1\sqrt{t}\le \phi(t) \le b_2\sqrt{t},
\]
for all $t$.
\begin{proposicao}\label{prop:martingaldt} For each vertex $j$, the sequence $(X_t^{(j)})_{t\ge j}$ defined as
\[
X_t^{(j)} := \frac{d_t(j)}{\phi(t)}
\]
is a martingale.
\end{proposicao}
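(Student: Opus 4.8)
The plan is to show that $\mathbb{E}[X_{t+1}^{(j)}\mid \sF_t] = X_t^{(j)}$ for all $t\ge j$, which is equivalent (since $\phi(t+1)/\phi(t) = 1 + \frac{1}{2t}$ is deterministic) to establishing the identity
\[
\mathbb{E}[d_{t+1}(j)\mid \sF_t] = \left(1 + \frac{1}{2t}\right) d_t(j).
\]
Integrability and $\sF_t$-measurability of $X_t^{(j)}$ are immediate since $d_t(j)$ is a bounded nonnegative integer (bounded by $2mt$), so the only real content is this one-step conditional expectation. Here I would be slightly careful about the error term: Lemma \ref{lema:probdeltadt} gives $\P(\Delta d_t(j) = k\mid G_t) = c_{m,p,k} d_t(j)^k/t + O(d_t(j)^{k+1}/t^{k+1})$, which has genuine lower-order corrections, so a priori $\mathbb{E}[\Delta d_t(j)\mid G_t]$ is only $\frac{d_t(j)}{2t}$ up to $O(d_t(j)^2/t^2)$ terms — not an exact martingale relation. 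I expect the resolution is that, summing over the $m$ possible endpoints with the exact (not asymptotic) transition probabilities computed inside the proof of Lemma \ref{lema:probdeltadt}, all the correction terms cancel exactly: $\mathbb{E}[\Delta d_t(j)\mid \sF_t]$ counts the expected number of the $m$ new edges landing on $j$, and by the Claim in that lemma each of the $m$ choices $Y_{t+1}^{(i)}$ hits $j$ with probability \emph{exactly} $d_t(j)/(2mt)$, so the expected increment is exactly $m\cdot d_t(j)/(2mt) = d_t(j)/(2t)$.

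Thus the key step is: write $\Delta d_{t+1}(j) = \sum_{i=1}^m e_{t+1}(j) $-contribution $= \sum_{i=1}^m \mathbf{1}\{Y_{t+1}^{(i)} = j\}$ (each of the $m$ edges added at step $t+1$ contributes $1$ to $d(j)$ precisely when its endpoint is $j$), take conditional expectation given $\sF_t$, and invoke the Claim from the proof of Lemma \ref{lema:probdeltadt} — namely $\P(Y_{t+1}^{(i)} = j\mid G_t) = d_t(j)/(2mt)$ for every $i\in\{1,\dots,m\}$ — to get $\mathbb{E}[\Delta d_{t+1}(j)\mid \sF_t] = d_t(j)/(2t)$. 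Rearranging gives $\mathbb{E}[d_{t+1}(j)\mid\sF_t] = (1 + \frac{1}{2t})d_t(j) = \frac{\phi(t+1)}{\phi(t)}d_t(j)$, and dividing by $\phi(t+1)$ yields $\mathbb{E}[X_{t+1}^{(j)}\mid\sF_t] = X_t^{(j)}$.

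The main obstacle — really the only subtlety — is justifying the claim that $Y_{t+1}^{(i)} = j$ with probability exactly $d_t(j)/(2mt)$ even when the increment contributes to $d_t(j)$ \emph{mid-step} (i.e., whether the degree used in the preferential-attachment probabilities is $d_t(\cdot)$ frozen at time $t$ or the running degree). Reading the formal definition in Section \ref{sec:HKdef}, all the attachment probabilities are written in terms of $d_t(\cdot)$, $e_t(\cdot,\cdot)$ — the degrees and multiplicities \emph{in $G_t$}, not updated as the $m$ edges are added — so the Markov-chain computation in Lemma \ref{lema:probdeltadt} applies verbatim and the exact probability $d_t(j)/(2mt)$ holds for each $i$. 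With that in hand the proof is a two-line verification. One should also note $X_j^{(j)} = d_j(j)/\phi(j)$ is well-defined (vertex $j$ has degree $m$, or $2$ if $j=1$ from the self-loop, at the moment of its creation), so the martingale is properly initialized at time $j$.
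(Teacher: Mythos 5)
Your proposal is correct and follows essentially the same route as the paper: write $d_{t+1}(j) = d_t(j) + \sum_{k=1}^m \mathbb{1}\{Y_{t+1}^{(k)} = j\}$, invoke the exact identity $\P(Y_{t+1}^{(k)} = j \mid G_t) = d_t(j)/(2mt)$ established in the Claim inside the proof of Lemma \ref{lema:probdeltadt}, and divide by $\phi(t+1)$. Your remark that one must use the exact per-choice probability from that Claim (rather than the asymptotic statement of the lemma with its $O$-correction) is precisely the point the paper relies on as well.
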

\begin{proof} Since the vertex $j$ will remain fixed throughout the proof, we will write simply $X_t$ instead of $X_t^{(j)}$.

Observe that we can write $d_{t+1}(j)$ as follows
\[
d_{t+1}(j) = d_t(j) + \sum_{k=1}^m \mathbb{1} \left\lbrace Y_{t+1}^{(k)} = j \right \rbrace.
\]
In addition, we proved in Lemma \ref{lema:probdeltadt} that, for all $k \in {1,...,m}$, we have
\begin{equation}\label{eq:yk}
\mathbb{P} \left(Y_{t+1}^{(k)} = j \middle | G_t \right) = \frac{d_t(j)}{2mt}.
\end{equation}
Thus, the follow equivalence relation is true
\begin{equation}\label{eq:expecdt}
\mathbb{E}\left[ d_{t+1}(j) \middle | G_t \right] = \left( 1+ \frac{1}{2t}\right)d_t(j).
\end{equation}
Then, dividing the above equation by $\phi(t+1)$ the desired result follows.
\end{proof}
Once we have Proposition \ref{prop:martingaldt} we are able to obtain an upper bound for $d_t(j)$.
\begin{teorema} \label{teo:upperbounddeg} There is a positive constant $b_3$ such that, for all vertex $j$
\[
\mathbb{P}\left( d_t(j) \ge b_3\sqrt{t}\log(t) \right) \le t^{-100}.
\]
\end{teorema}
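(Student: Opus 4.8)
\medskip
\noindent\textbf{Proof plan.} The plan is to apply the Azuma--H\"offding inequality to the martingale $X_t^{(j)}=d_t(j)/\phi(t)$ supplied by Proposition \ref{prop:martingaldt}. The whole point of normalizing by $\phi(t)$, which satisfies $b_1\sqrt t\le\phi(t)\le b_2\sqrt t$, is that it shrinks the $O(1)$ increments of $d_t(j)$ down to increments of size $O(1/\sqrt t)$; the squares of these increments then sum to only $O(\log t)$, so that a deviation of order $\log t$ for $X_t^{(j)}$ occurs with probability $\exp(-\Theta(\log t))$, that is, a power of $t$ whose exponent can be made as large as we like by choosing $b_3$ large.

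\medskip
\noindent\textit{Step 1 (increment bound).} Using $\phi(t+1)=\phi(t)\bigl(1+\tfrac1{2t}\bigr)$ together with $d_{t+1}(j)=d_t(j)+\sum_{k=1}^m\mathbb{1}\{Y_{t+1}^{(k)}=j\}$, a one-line computation gives
\[
X_{t+1}^{(j)}-X_t^{(j)} \;=\; \frac{1}{\phi(t)}\cdot\frac{\,\sum_{k=1}^m\mathbb{1}\{Y_{t+1}^{(k)}=j\}-\frac{d_t(j)}{2t}\,}{1+\frac{1}{2t}}.
\]
Since $G_t$ has at most $mt$ edges, $d_t(j)\le 2mt$ deterministically, so the numerator lies in $[-m,m]$; combined with $\phi(t)\ge b_1\sqrt t$ this yields $\lvert X_{t+1}^{(j)}-X_t^{(j)}\rvert\le a_t:=m/(b_1\sqrt t)$. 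Hence $\sum_{s=j}^{t-1}a_s^2\le (m^2/b_1^2)\sum_{s=1}^{t-1}s^{-1}\le (2m^2/b_1^2)\log t$ for $t$ large.

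\medskip
\noindent\textit{Step 2 (rescaling and concentration).} Because $\phi(t)\le b_2\sqrt t$, the event $\{d_t(j)\ge b_3\sqrt t\log t\}$ is contained in $\{X_t^{(j)}\ge (b_3/b_2)\log t\}$. The starting value $X_j^{(j)}=d_j(j)/\phi(j)$ is at most a constant (indeed $d_j(j)\le 2m$ and $\phi(j)\ge 1$), so once $b_3$ is large enough this event is in turn contained in $\{X_t^{(j)}-X_j^{(j)}\ge (b_3/2b_2)\log t\}$. Azuma--H\"offding then gives
\[
\mathbb{P}\left(d_t(j)\ge b_3\sqrt t\log t\right)\;\le\;\exp\!\left(-\frac{(b_3/2b_2)^2(\log t)^2}{(2m^2/b_1^2)\log t}\right)\;=\;t^{-\,b_1^2 b_3^2/(8 b_2^2 m^2)},
\]
and taking $b_3$ to be a sufficiently large multiple of $b_2 m/b_1$ makes the exponent at least $100$. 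The finitely many small values of $t$ are trivial (for $t=1$ the asserted bound reads $\mathbb{P}(d_1(1)\ge 0)\le 1$), and if necessary one enlarges $b_3$ further, which only helps.

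\medskip
\noindent The only mildly delicate point is Step 1: one must keep the a priori bound $d_t(j)\le 2mt$ in play in order to control the drift term $d_t(j)/(2t)$ in the numerator of the increment, for otherwise the increments would not be uniformly $O(1/\sqrt t)$. Everything after that is the routine observation that $\sum_s s^{-1}=O(\log t)$ converts Azuma--H\"offding into a polynomial tail bound whose exponent is at our disposal.
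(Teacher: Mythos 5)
Your proposal is correct and follows essentially the same route as the paper: apply Azuma--H\"offding to the martingale $X_t^{(j)}=d_t(j)/\phi(t)$ from Proposition \ref{prop:martingaldt}, bound each increment by $O(1/\sqrt{s})$ so the sum of squared increments is $O(\log t)$, and take a deviation of order $\log t$ to get a polynomial tail whose exponent grows with $b_3$. Your handling of the drift term via $d_t(j)\le 2mt$, of the initial value $X_j^{(j)}=O(1)$, and of the finitely many small $t$ is just a more explicit version of what the paper does.
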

\begin{proof}The proof is essentially applying Azuma's inequality to the martingale we obtained in Proposition \ref{prop:martingaldt}. Again we will write it as $X_t$.

Applying Azuma's inequality demands controlling $X_t$'s variation, which satisfies the upper bound below
\begin{equation}\label{ineq:varxs}
	\begin{split}
		\left| \Delta X_s \right| & = \left | \frac{d_{s+1}(j) - \left( 1+ \frac{1}{2s}\right)d_s(j)}{\phi(s+1)}\right | \le \frac{2m}{\phi(s+1)} \le \frac{b_4}{\sqrt{s}}.
	\end{split}
\end{equation}
Thus,
\[
\sum_{s=j+1}^t \left| \Delta X_s \right|^2 \le b_5 \log(t).
\]
We must notice that none of the above constants depend on $j$. Then, Azuma's inequality gives us that
\begin{equation}\label{ineq:probazumadt}
\mathbb{P}\left( \left| X_t - X_0 \right| > \lambda \right) \le 2\exp\left \lbrace -\frac{\lambda^2}{b_5\log(t)}\right \rbrace.
\end{equation}
Choosing $\lambda = 10\sqrt{b_5}\log(t)$ and recalling $X_t = d_t(j)/\phi(t)$ we obtain
\[
\mathbb{P}\left( \left| d_t(j) - \frac{m\phi(t)}{\phi(j)} \right| > 10\sqrt{b_5}\phi(t)\log(t) \right)\le t^{-100}.
\]
Finally, using that $b_1 \sqrt{t} \le \phi(t+1) \le b_2 \sqrt{t}$, comes
\[
\mathbb{P}\left( \left| d_t(j) - \frac{m\phi(t)}{\phi(j)} \right| > 10\sqrt{b_5}b_2\sqrt{t}\log(t) \right) \le t^{-100}.
\]
Implying the desired result.
\end{proof}
An immediate consequence of Theorem ~\ref{teo:upperbounddeg} is an upper bound for the maximum degree of $G_t$

\begin{corolario} [Upper Bound to the maximum degree] \label{cor:dmax}There exists a positive constant $b_1$ such that
\[
\P \left( d_{max}(G_t) \ge b_1 \sqrt{t}\log(t) \right) \le t^{-99}.
\]
\end{corolario}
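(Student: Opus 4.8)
The plan is to deduce the corollary from Theorem~\ref{teo:upperbounddeg} by a simple union bound over all vertices of $G_t$. First I would record the trivial structural fact that, by the definition of the process in Section~\ref{sec:HKdef}, the graph $G_t$ has exactly $t$ vertices, namely $V_t=\{1,2,\dots,t\}$ (one new vertex is added at each step after the initial graph $G_1$). Hence
\[
d_{max}(G_t)=\max_{1\le j\le t} d_t(j),
\]
so the event $\{d_{max}(G_t)\ge b_3\sqrt{t}\log(t)\}$ is the union, over $j\in\{1,\dots,t\}$, of the events $\{d_t(j)\ge b_3\sqrt{t}\log(t)\}$, where $b_3$ is the constant furnished by Theorem~\ref{teo:upperbounddeg}.

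The key point I would stress is that the constant $b_3$ in Theorem~\ref{teo:upperbounddeg} does \emph{not} depend on the vertex $j$ — this is explicitly noted in its proof, since all the constants $b_1,\dots,b_5$ appearing there come from the uniform bound $b_1\sqrt{t}\le\phi(t)\le b_2\sqrt{t}$ and the uniform variation estimate \eqref{ineq:varxs}, none of which involves $j$. Consequently Theorem~\ref{teo:upperbounddeg} gives $\P(d_t(j)\ge b_3\sqrt{t}\log(t))\le t^{-100}$ simultaneously for every $j$, and a union bound yields
\[
\P\left(d_{max}(G_t)\ge b_3\sqrt{t}\log(t)\right)\le\sum_{j=1}^{t}\P\left(d_t(j)\ge b_3\sqrt{t}\log(t)\right)\le t\cdot t^{-100}=t^{-99}.
\]
Setting $b_1:=b_3$ then gives exactly the stated inequality. (For the finitely many small values of $t$ where Theorem~\ref{teo:upperbounddeg} might be vacuous, the claim is trivial because $t^{-99}$ is close to or equal to $1$.)

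There is no real obstacle here: the corollary is a routine consequence of the per-vertex tail bound, and the only thing that genuinely needs checking is the $j$-uniformity of the constant, which is immediate from the proof of Theorem~\ref{teo:upperbounddeg}. The mild loss of one power of $t$ (from $t^{-100}$ to $t^{-99}$) is exactly the price of the union bound over the $t$ vertices, and is harmless for all later applications, where polynomially small failure probabilities suffice.
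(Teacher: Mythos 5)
Your proposal is correct and coincides with the paper's own argument: the paper proves Corollary~\ref{cor:dmax} by writing $\{d_{max}(G_t)\ge b_1\sqrt{t}\log(t)\}$ as the union over $j\le t$ of $\{d_t(j)\ge b_1\sqrt{t}\log(t)\}$ and applying Theorem~\ref{teo:upperbounddeg} with a union bound, exactly as you do. Your explicit remark on the $j$-uniformity of the constant is the same point the paper records inside the proof of Theorem~\ref{teo:upperbounddeg}.
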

\begin{proof} The event involving $d_{max}(G_t)$ may be seen as follows
\[
\left \lbrace  d_{max}(G_t) \ge b_1 \sqrt{t}\log(t) \right \rbrace = \bigcup _{j \le t}\left \lbrace d_t(j) \ge b_1\sqrt{t}\log(t) \right \rbrace.
\]
Using union bound and applying Theorem \ref{teo:upperbounddeg} we prove the Corollary.
\end{proof}
The next three lemmas are of a technical nature. Their statements will become clearer in the proof of the upper bound for $C_t$.
\begin{lema}\label{lema:reldtdto} There are positive constants $b_6$ and $b_7$, such that, for all vertex $j$ and all time $t_0 \le t$, we have
\[
\mathbb{P} \left( d_{t_0}(j) > b_4\sqrt{\frac{t_0}{t}}d_t(j) + b_5 \sqrt{t_0}\log(t)\right) \le t^{-100}.
\]
\end{lema}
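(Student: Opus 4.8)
The plan is to derive this from the normalized-degree martingale $X^{(j)}_s=d_s(j)/\phi(s)$ of Proposition~\ref{prop:martingaldt} together with Azuma's inequality. The underlying idea is that $d_{t_0}(j)$ equals, up to the deterministic factor $\phi(t_0)$, the time-$t_0$ value of this martingale, so the martingale property limits how much it can exceed a suitably rescaled copy of its time-$t$ value $d_t(j)$.

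Concretely, I would fix $t_0$ and work with the process $M_s:=X^{(j)}_{t_0}-X^{(j)}_s$ for $s\ge t_0$. Since $X^{(j)}_{t_0}$ is $\sF_{t_0}$-measurable and $(X^{(j)}_s)_{s\ge t_0}$ is a martingale, $(M_s)_{s\ge t_0}$ is itself a martingale with $M_{t_0}=0$, and its increments satisfy $|M_{s+1}-M_s|=|X^{(j)}_{s+1}-X^{(j)}_s|\le b_4/\sqrt{s}$ by exactly the bound obtained in \eqref{ineq:varxs}. Consequently $\sum_{s=t_0}^{t-1}|M_{s+1}-M_s|^2\le C_0\log t$ for an absolute constant $C_0$.

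Next I would translate the event of interest into a deviation event for $M$. On $\{d_{t_0}(j)>b_6\sqrt{t_0/t}\,d_t(j)+b_7\sqrt{t_0}\log t\}$, dividing by $\phi(t_0)$ and using $\phi(t_0)\le b_2\sqrt{t_0}$ together with $d_t(j)=X^{(j)}_t\,\phi(t)\ge b_1\sqrt{t}\,X^{(j)}_t$ (valid because $X^{(j)}_t\ge 0$) gives
\[
X^{(j)}_{t_0}>\frac{b_6 b_1}{b_2}\,X^{(j)}_t+\frac{b_7}{b_2}\log t.
\]
Choosing $b_6$ so that $b_6 b_1/b_2\ge 1$ and using $X^{(j)}_t\ge 0$ once more forces $M_t=X^{(j)}_{t_0}-X^{(j)}_t>(b_7/b_2)\log t$. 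Then Azuma's inequality applied to $(M_s)_{s\ge t_0}$ gives
\[
\P\!\left(M_t>\frac{b_7}{b_2}\log t\right)\le\exp\!\left(-\frac{(b_7/b_2)^2\log^2 t}{C_0\log t}\right)=t^{-(b_7/b_2)^2/C_0},
\]
and it only remains to take $b_7$ large enough that $(b_7/b_2)^2/C_0\ge 100$.

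I do not expect a genuine obstacle: the argument reduces to controlling the increments of the very martingale already analyzed for Theorem~\ref{teo:upperbounddeg}, and the quadratic-variation estimate $\sum_{s=t_0}^{t-1}1/s=O(\log t)$ is immediate. The one point requiring care is the event translation, where one must check that after dividing through by $\phi(t_0)$ the term $b_6\sqrt{t_0/t}\,d_t(j)$ becomes a multiple of $X^{(j)}_t$ that is at least $X^{(j)}_t$ itself, so that nonnegativity of the degree martingale lets us absorb it and reduce to a clean deviation of order $\log t$ that Azuma's inequality controls.
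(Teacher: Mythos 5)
Your proposal is correct and follows essentially the same route as the paper: the paper also applies Azuma's inequality to the normalized-degree martingale $X^{(j)}_s$ restarted at time $t_0$ (there written $Z_s=X_{t_0+s}$), uses the increment bound from \eqref{ineq:varxs} to get quadratic variation $O(\log t)$, and concludes via the same event inclusion obtained from $\phi(s)=\Theta(\sqrt{s})$. Your handling of the constants (choosing $b_6$ so the $X^{(j)}_t$ term can be absorbed and $b_7$ large enough for the exponent) matches the paper's choice $\lambda=10\sqrt{b_5}\log t$.
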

\begin{proof} For each vertex $j$ and $t_0 \le t$, consider the sequence of random variables $\left(Z_s \right)_{s \ge 0}$ defined as $Z_s = X_{t_0 + s}$, which is adaptable to the filtration $\mathcal{F}_s := G_{t_0 + t}$.

Concerning $Z_s$'s variation, using (\ref{ineq:varxs}), we have the following upper bound
\[
	\left| \Delta Z_s \right| = \left| \Delta X_{t_0+s} \right| \le \frac{b_4}{\sqrt{t_0+s}}.
\]
Thus,
\[
\sum_{s=0}^{t-t_0} \left| \Delta Z_s \right|^2 \le b_5\log(t).
\]
Applying Azuma's inequality, we obtain
\begin{equation}\label{ineq:zt}
	\begin{split}
		\mathbb{P}\left( \left| Z_{t-t_0} - Z_0 \right| \ge \lambda \right) \le 2\exp\left\lbrace - \frac{\l^2}{b_5 \log(t)} \right\rbrace.
	\end{split}
\end{equation}
But, the definition of $Z_s$ and the fact that $\phi(t) = \Theta\left(\sqrt{t}\right)$ the inclusion of events below is true
\[
\left\lbrace d_{t_0}(j) > b_2\sqrt{t_0}\l + \frac{b_2\sqrt{t_0}}{b_1\sqrt{t}}d_t(j) \right \rbrace \subset \left \lbrace \left| Z_t - Z_0 \right| \ge \lambda \right \rbrace,
\]
which, combined to (\ref{ineq:zt}), proves the lemma if we choose $\l = 10\sqrt{b_5}\log(t)$.
\end{proof}
\begin{lema}\label{lema:cotaunidto}
There is a positive constant $b_8$ such that
\[
\mathbb{P}\left(\bigcup_{j = 1}^t \bigcup_{t_0 = j}^t \left\lbrace d_{t_0}(j) > b_8\sqrt{t_0}\log(t) \right\rbrace \right) \le 2t^{-98}.
\]
\end{lema}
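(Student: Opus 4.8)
The plan is to reduce the uniform-in-$(j,t_0)$ statement to the two pointwise estimates already established, Lemma~\ref{lema:reldtdto} and Theorem~\ref{teo:upperbounddeg}, and then pay for the uniformity with a crude union bound over the at most $t^2$ pairs $(j,t_0)$ with $1\le j\le t_0\le t$. It is worth noting up front that applying Theorem~\ref{teo:upperbounddeg} directly at time $t_0$ would only furnish a failure probability decaying in $t_0$, and summing such a bound over $j\le t_0$ yields a quantity that is merely $O(1)$ in $t$; this is exactly why Lemma~\ref{lema:reldtdto}, whose failure probability decays in $t$ \emph{uniformly} over $t_0\le t$, is the correct input here.

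The first step is to fix $j$ and $t_0$ with $j\le t_0\le t$ and show that $\mathbb{P}(d_{t_0}(j)>b_8\sqrt{t_0}\log t)\le 2t^{-100}$ for a suitable constant $b_8$. Let $b_3$ be the constant from Theorem~\ref{teo:upperbounddeg} and $b_4,b_5$ the constants appearing in the displayed inequality of Lemma~\ref{lema:reldtdto}, and set $b_8:=b_3b_4+b_5$. The key point is the event inclusion
\[
\left\{d_{t_0}(j)>b_8\sqrt{t_0}\log t\right\}\ \subseteq\ \left\{d_{t_0}(j)>b_4\sqrt{t_0/t}\,d_t(j)+b_5\sqrt{t_0}\log t\right\}\cup\left\{d_t(j)\ge b_3\sqrt{t}\log t\right\}.
\]
Indeed, on the complement of the second event on the right one has $d_t(j)<b_3\sqrt{t}\log t$, hence $b_4\sqrt{t_0/t}\,d_t(j)+b_5\sqrt{t_0}\log t<(b_3b_4+b_5)\sqrt{t_0}\log t=b_8\sqrt{t_0}\log t$, so whenever the left-hand event also holds the first event on the right must hold. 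Now Lemma~\ref{lema:reldtdto} bounds the probability of the first right-hand event by $t^{-100}$ and Theorem~\ref{teo:upperbounddeg} (applied at time $t$, with vertex $j\le t_0\le t$) bounds the second by $t^{-100}$, and adding these gives the claimed $2t^{-100}$.

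The second step is the union bound. There are at most $\binom{t+1}{2}\le t^2$ pairs $(j,t_0)$ with $1\le j\le t_0\le t$, and since $b_8$ and the bound from the first step do not depend on $(j,t_0)$, summing yields
\[
\mathbb{P}\left(\bigcup_{j=1}^t\bigcup_{t_0=j}^t\left\{d_{t_0}(j)>b_8\sqrt{t_0}\log t\right\}\right)\le t^2\cdot 2t^{-100}=2t^{-98},
\]
which is exactly the assertion. I do not anticipate a genuine obstacle in this argument; the only points requiring a little care are that the constants provided by Lemma~\ref{lema:reldtdto} and Theorem~\ref{teo:upperbounddeg} are uniform in the vertex (they are, by construction of those proofs), and that it is $\log t$ rather than $\log t_0$ that appears throughout, which is automatic since both input estimates already feature $\log t$.
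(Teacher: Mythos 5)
Your proposal is correct and follows essentially the same route as the paper: the event inclusion $\{d_{t_0}(j)>b_8\sqrt{t_0}\log t\}\subseteq B_{t_0,j}\cup C_{t,j}$ is exactly the paper's observation that $B^c_{t_0,j}\cap C^c_{t,j}$ forces $d_{t_0}(j)\le (b_3b_4+b_5)\sqrt{t_0}\log t$, followed by the same pointwise bounds from Lemma~\ref{lema:reldtdto} and Theorem~\ref{teo:upperbounddeg} and the same union bound over the at most $t^2$ pairs $(j,t_0)$. The only difference is the cosmetic choice $b_8=b_3b_4+b_5$ versus the paper's $2(b_3b_4+b_5)$, which does not affect the argument.
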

\begin{proof}
This Lemma is consequence of Theorem \ref{teo:upperbounddeg} and Lemma \ref{lema:reldtdto}, which state, respectively
\begin{equation}
\mathbb{P}\left( d_t(j) \ge b_3\sqrt{t}\log(t) \right) \le t^{-100},
\end{equation}
\begin{equation}
\mathbb{P} \left( d_{t_0}(j) > b_4\sqrt{\frac{t_0}{t}}d_t(j) + b_5 \sqrt{t_0}\log(t)\right) \le t^{-100}.
\end{equation}
In which the constants $b_3, b_4$ and $b_5$ don't depend on the vertex $j$ neither the times $t_0$ and $t$.

Now, for each $t_0 \le t$ and vertex $j$, consider the events below
\[
A_{t_0,j} := \left\lbrace d_{t_0}(j) > b_8\sqrt{t_0}\log(t) \right\rbrace,
\]
\[
B_{t_0,j} := \left\lbrace d_{t_0}(j) > b_4\sqrt{\frac{t_0}{t}}d_t(j) + b_5 \sqrt{t_0}\log(t) \right\rbrace,
\]
e
\[
C_{t,j} := \left\lbrace d_t(j) \ge b_3\sqrt{t}\log(t)\right\rbrace.
\]
Now we obtain an upper bound for $\mathbb{P} \left( A_{t_0,j}\right)$ using the bounds we have obtained for the probabilities of $B_{t_0,j}$ and $C_{t,j}$. 
\begin{equation}
	\begin{split}
		\mathbb{P} \left( A_{t_0,j}\right) & = \mathbb{P} \left( A_{t_0,j} \cap B_{t_0,j} \right) + \mathbb{P} \left( A_{t_0,j} \cap B^c_{t_0,j} \right) \\
& \le \mathbb{P} \left( B_{t_0,j}\right)+ \mathbb{P} \left( A_{t_0,j} \cap B^c_{t_0,j} \cap C_{t,j} \right) + \mathbb{P} \left( A_{t_0,j} \cap B^c_{t_0,j} \cap C^c_{t,j} \right) \\ 
& \le \mathbb{P} \left( B_{t_0,j}\right) + \mathbb{P} \left( C_{t_0,j}\right) +  \mathbb{P} \left( A_{t_0,j} \cap B^c_{t_0,j} \cap C^c_{t,j} \right).
	\end{split}
\end{equation}
However, notice we have the following inclusion of events
\[
B^c_{t_0,j} \cap C^c_{t,j} \subset \left\lbrace d_{t_0}(j) \le (b_4b_3 + b_5)\sqrt{t_0}\log(t) \right\rbrace.
\]
Thus, choosing $b_8 = 2(b_4b_3 + b_5)$ we have $A_{t_0,j} \cap B^c_{t_0,j} \cap C^c_{t,j} = \emptyset$, which allows us to conclude that
\[
\mathbb{P} \left( A_{t_0,j}\right) \le 2t^{-100}.
\]
Finally, an union bound over $t_0$ followed by a union bound over $j$ implies the desired result.
\end{proof}

\section{Positive local clustering}\label{sec:clusloc}

In this section we prove Theorem \ref{thm:local}, which says that the local clustering coefficient is bounded away from 0 with high probability. 
\begin{proof}[Proof of Theorem \ref{thm:local}] We must find a lower bound for
\[
\mathcal{C}^{loc}_{G_t}  := \frac{1}{t}\sum_{v\in G_t} \mathcal{C}_{G_t}(v).
\]
Let $v_m$ be a vertex in $G_t$ whose degree is $m$. Observe that each \textit{TF-step} we took when $v_m$ was added increase $e\left( \Gamma_{G_t}(v_m) \right)$ by one. So, denote by $T_v$ the number of \textit{TF-steps} taken at the moment of creation of vertex $v$. Since all the choices of steps are made independently, $T_v$ follows a binomial distribution with parameters $m-1$ and $p$. Now, for every vertex we add to the graph, put a blue label on it if $T_v \ge 1$. The probability of labeling a vertex is bounded away from zero and we denote it by $p_b$.

By Theorem \ref{teo:powerlaw}, with probability at least $1-t^{-100}$, we have
\[
N_t(m) \ge b_1t - b_2\sqrt{t\log(t)}.
\]
Thus, the number of vertices in $G_t$ of degree $m$ which were labeled, $N^{(b)}_t(m)$, is bounded from below by a binomial random variable, $B_t$, with parameters $ b_1t - b_2\sqrt{t\log(t)}$ and $p_b$. But, about $B_t$ we have, for all $\delta >0$,
\[
\P\left(B_t \le \frac{\Ed[B_t]}{4}\right) \le \left(1-p_b+p_be^{-\delta}\right)^{b_1t - b_2\sqrt{t\log(t)}}\exp\left\lbrace \delta p_b  \left(b_1t - b_2\sqrt{t\log(t)} \right)/4\right \rbrace
\]
and choosing $\delta$ properly we conclude that, \textit{w.h.p}, 
\begin{equation}\label{eq:ntb}
N_t^{(b)}(m) \ge  p_b\left(b_1t - b_2\sqrt{t\log(t)}\right)/4.
\end{equation}
Finally, note that each blue vertex of degree $m$ has $\mathcal{C}_{G_t}(v) > 2m^{-2}$. Combining this with (\ref{eq:ntb}) we have
\begin{equation*}
\begin{split}
\mathcal{C}_{G_t}^{loc} & > \frac{1}{t}\sum_{v \in N_t^{(b)}(m)} \mathcal{C}_{G_t}(v) > t^{-1}N_t^{(b)}(m)2m^{-2} \\
 &> t^{-1}2m^{-2}p_b\left(b_1t - b_2\sqrt{t\log(t)}\right)/4 \\
 & \to 2m^{-2}p_bb_1 >0 \mbox{ as }t\to+\infty, 
\end{split}
\end{equation*}
proving the theorem.
\end{proof}
\section{Vanishing global clustering}\label{sec:cluglo}
This section is devoted to the proof of Theorem \ref{thm:global} which states that the global clustering of $G_t$ goes to zero at $1/\log t$ speed. Since the proof depends on estimates for the number of cherries, $C_t$, we first derive the necessary bounds and finally put all the pieces together at the end of this section.

\subsection{Preliminary estimates for number of cherries}

Let 
\[\tilde{C}_t:=\sum_{j=1}^td^2_t(j)\]
denote the sum of the squares of degrees in $G_t$. We will to prove bounds for $\tilde{C}_t$ instead of proving them directly for $C_t$. Since $C_t = \tilde{C}_t/2 - mt$ the results obtained for $\tilde{C}_t$ directly extend to $C_t$.
\begin{lema}\label{lema:varctil} There is a positive constant $B_3$ such that
\[
\mathbb{E}\left[ \left( \tilde{C}_{s+1} - \tilde{C}_s\right)^2 \middle| G_s\right] \le B_3\frac{d_{max}(G_s)\tilde{C}_s}{s}.
\]
\end{lema}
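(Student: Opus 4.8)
The plan is to expand the increment $\tilde{C}_{s+1}-\tilde{C}_s$ directly in terms of the one-step degree increments and then estimate its conditional second moment. Writing $\Delta_j:=d_{s+1}(j)-d_s(j)$ for $1\le j\le s$ and recalling $d_{s+1}(s+1)=m$, one has
\[
\tilde{C}_{s+1}-\tilde{C}_s \;=\; m^2 + \sum_{j=1}^{s}\Delta_j^2 + 2\sum_{j=1}^{s} d_s(j)\,\Delta_j .
\]
Since the new vertex sends exactly $m$ edges, all with one endpoint in $V_s$, we have $\sum_{j\le s}\Delta_j=m$ and $0\le\Delta_j\le m$, so $\sum_j\Delta_j^2\le m\sum_j\Delta_j=m^2$ and the ``constant part'' $m^2+\sum_j\Delta_j^2$ is at most $2m^2$. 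Hence $(\tilde{C}_{s+1}-\tilde{C}_s)^2 \le 8m^4 + 8\bigl(\sum_{j=1}^{s} d_s(j)\Delta_j\bigr)^2$, and it remains to bound the conditional expectation of the quadratic term.

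For that I would expand $\bigl(\sum_j d_s(j)\Delta_j\bigr)^2=\sum_{j,k}d_s(j)d_s(k)\Delta_j\Delta_k$, a sum of nonnegative terms, and use $d_s(j)d_s(k)\le\tfrac12\bigl(d_s(j)^2+d_s(k)^2\bigr)$ together with the symmetry $j\leftrightarrow k$ to bound it by $\sum_{j}d_s(j)^2\,\Delta_j\sum_k\Delta_k = m\sum_j d_s(j)^2\Delta_j$. Taking the conditional expectation and using $\mathbb{E}[\Delta_j\mid G_s]=d_s(j)/(2s)$ — which is precisely \eqref{eq:choicestep}, equivalently \eqref{eq:expecdt} — together with the crude bound $\sum_j d_s(j)^3\le d_{max}(G_s)\sum_j d_s(j)^2$, one gets
\[
\mathbb{E}\Bigl[\bigl(\textstyle\sum_{j}d_s(j)\Delta_j\bigr)^2 \,\Big|\, G_s\Bigr] \;\le\; \frac{m}{2s}\sum_{j=1}^{s}d_s(j)^3 \;\le\; \frac{m\,d_{max}(G_s)}{2s}\sum_{j=1}^{s}d_s(j)^2 \;=\; \frac{m\,d_{max}(G_s)\,\tilde{C}_s}{2s}.
\]
Combining the two displays gives $\mathbb{E}[(\tilde{C}_{s+1}-\tilde{C}_s)^2\mid G_s]\le 8m^4 + 4m\,d_{max}(G_s)\tilde{C}_s/s$; since every vertex of $G_s$ has degree at least $2$ (so $\tilde{C}_s\ge m^2(s-1)$) and $d_{max}(G_s)\ge m$, the term $8m^4$ is itself at most $16m\,d_{max}(G_s)\tilde{C}_s/s$ for $s\ge 2$, the finitely many small values of $s$ being absorbed into the constant. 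The lemma then follows with, say, $B_3=20m$.

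The only genuinely delicate point is the treatment of the off-diagonal terms $\Delta_j\Delta_k$, $j\ne k$, whose joint conditional law is awkward because of the triad-formation steps. The symmetrization inequality $d_s(j)d_s(k)\le\tfrac12(d_s(j)^2+d_s(k)^2)$ is exactly what removes this difficulty: after symmetrizing, the only input needed about the joint law of the increments is the deterministic identity $\sum_k\Delta_k=m$, so one never has to estimate $\mathbb{P}(Y_{s+1}^{(i)}=j,\ Y_{s+1}^{(i')}=k\mid G_s)$ directly, and everything else reduces to the first-moment formula already established in Lemma \ref{lema:probdeltadt}.
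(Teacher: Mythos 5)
Your proof is correct and follows essentially the same route as the paper's: both arguments reduce $\mathbb{E}\bigl[(\tilde{C}_{s+1}-\tilde{C}_s)^2\mid G_s\bigr]$ to a constant multiple of $\frac{1}{s}\sum_{j\le s} d_s^3(j)$ by combining the one-step attachment law of Lemma \ref{lema:probdeltadt} with the fact that the old vertices gain at most $m$ degree units in total, and then conclude via $\sum_j d_s^3(j)\le d_{max}(G_s)\,\tilde{C}_s$. The only cosmetic difference is that you expand the increment exactly and symmetrize the cross terms using $d_s(j)d_s(k)\le\tfrac12\bigl(d_s(j)^2+d_s(k)^2\bigr)$ together with $\sum_k\Delta_k=m$, whereas the paper bounds the increment by $4m\sum_j d_s(j)\mathbb{1}\{\Delta d_s(j)\ge 1\}$ and applies Cauchy--Schwarz with $\mathbb{P}\left(\Delta d_s(j)\ge 1\mid G_s\right)\le d_s(j)/(2s)$.
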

\begin{proof}We start the proof noticing that for all vertex $j$ we have~$d^2_{s+1}(j) - d^2_s(j) \le 2md_s(j) + m^2$ deterministically 
From this remark the inequality below follows.
\begin{equation}
\tilde{C}_{s+1} - \tilde{C}_s \le 2m\sum_{j=1}^s d_s(j)\mathbb{1}\left\lbrace \Delta d_s(j) \ge 1 \right\rbrace + 2m^2. 
\end{equation}
Since all vertices have degree at least $m$, we have $m^2 \le m\sum_{j=1}^s d_s(j)\mathbb{1}\left\lbrace \Delta d_s(j) \ge 1 \right\rbrace$, thus
\begin{equation}
\tilde{C}_{s+1} - \tilde{C}_s \le 4m\sum_{j=1}^s d_s(j)\mathbb{1}\left\lbrace \Delta d_s(j) \ge 1 \right\rbrace. 
\end{equation}
Applying Cauchy-Schwarz to the above inequality, we obtain
\begin{equation}
\begin{split}
\left( \tilde{C}_{s+1} - \tilde{C}_s \right)^2 &\le 16m^2\left(\sum_{j=1}^s d_s(j)\mathbb{1}\left\lbrace \Delta d_s(j) \ge 1 \right\rbrace \cdot \mathbb{1}\left\lbrace \Delta d_s(j) \ge 1 \right\rbrace \right)^2 \\
& \le 16m^2\left(\sum_{j=1}^s d^2_s(j)\mathbb{1}\left\lbrace \Delta d_s(j) \ge 1 \right\rbrace\right)\left( \sum_{j=1}^s\mathbb{1}\left\lbrace \Delta d_s(j) \ge 1 \right\rbrace\right)\\
& \le 16m^3\sum_{j=1}^s d^2_s(j)\mathbb{1}\left\lbrace \Delta d_s(j) \ge 1 \right\rbrace.
\end{split}
\end{equation}
Recalling that 
\[
\mathbb{P}\left( \Delta d_s(j) \ge 1 \middle| G_s\right)\le \frac{d_s(j)}{2s}
\]
we have
\[
\mathbb{E}\left( \left( \tilde{C}_{s+1} - \tilde{C}_s \right)^2 \middle| G_s\right) \le B_3\sum_{j=1}^s \frac{d^3_s(j)}{s} \le B_3\frac{d_{max}(G_s)\tilde{C}_s}{s},
\]
concluding the proof.
\end{proof}
\begin{teorema}[Upper bound for $C_t$]\label{teo:cotasupcs}There is a positive constant $B_1$ such that
\[
\mathbb{P}\left( C_t \ge B_1t\log^2(t) \right) \le t^{-98}.
\]
\end{teorema}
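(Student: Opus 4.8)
The plan is to pass to the quantity $\tilde{C}_t=\sum_j d_t^2(j)$ (recalling $C_t=\tilde{C}_t/2-mt$, so a bound for $\tilde{C}_t$ transfers to $C_t$), to show that a suitable normalization of $\tilde{C}_t$ is a supermartingale, and then to control its fluctuations by Freedman's inequality (Theorem \ref{teo:azumaquadratic}) after feeding a crude preliminary bound on $\tilde{C}_s$ into the quadratic variation. This is a mild instance of the bootstrap announced in the introduction. For the drift, write $d_{s+1}^2(j)-d_s^2(j)=2d_s(j)\,\Delta d_s(j)+(\Delta d_s(j))^2$, use $\mathbb{E}[\Delta d_s(j)\mid G_s]=d_s(j)/(2s)$ (which is $(\ref{eq:expecdt})$) together with the deterministic bound $(\Delta d_s(j))^2\le m\,\Delta d_s(j)$ (at most $m$ edges land on $j$ in one step), sum over $j$, and use $\sum_j d_s(j)=2ms$ to get $\mathbb{E}[\tilde{C}_{s+1}\mid G_s]\le(1+\tfrac1s)\tilde{C}_s+m^2$. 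Dividing by $s+1$ then shows that $M_s:=\tilde{C}_s/s-m^2\sum_{r=1}^{s-1}\tfrac1r$ is a supermartingale, with $M_1$ a deterministic constant since $G_1$ is deterministic.

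Next I would set up a stopped process. Fix $t$ and let $\mathcal{G}$ be the event of Lemma \ref{lema:cotaunidto}, on which $d_{t_0}(j)\le b_8\sqrt{t_0}\log t$ for all $j\le t_0\le t$, so $\mathbb{P}(\mathcal{G}^c)\le 2t^{-98}$; on $\mathcal{G}$ one automatically has the preliminary bound $\tilde{C}_s\le\big(\max_j d_s(j)\big)\sum_j d_s(j)\le 2mb_8\,s^{3/2}\log t$ for all $s\le t$. Let $\tau$ be the first time the degree bound fails, so that $\{\tau>t\}=\mathcal{G}$, and put $\widehat{M}_s:=M_{s\wedge\tau}-M_1$, a supermartingale with $\widehat{M}_1=0$. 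On $\{s<\tau\}$ the degree bound together with the trivial estimate $\tilde{C}_s\le 4m^2s^2$ yields a deterministic increment bound $|\widehat{M}_{s+1}-\widehat{M}_s|\le R$ for a constant $R=R(m,p)$; the only point that needs care is that $|\tilde{C}_{s+1}-\tilde{C}_s|/(s+1)\le 4m^2 d_{max}(G_s)/(s+1)$ stays bounded, which follows by using $d_{max}(G_s)\le 2ms$ for small $s$ and $d_{max}(G_s)\le b_8\sqrt{s}\log t$ for $s$ large. Moreover, Lemma \ref{lema:varctil} combined with the preliminary bound $\tilde{C}_s\le 2mb_8 s^{3/2}\log t$ gives $\mathbb{E}\big[(M_{s+1}-M_s)^2\mid G_s\big]\,\mathbb{1}\{s<\tau\}=O(\log^2 t/s)$, so summing over $s\le t$ the quadratic variation of $\widehat{M}$ satisfies $V_t\le C_2\log^3 t$ for a constant $C_2=C_2(m,p)$.

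Then I would apply Freedman's inequality to $\widehat{M}$ with threshold $\lambda=C_3\log^2 t$ and $\sigma^2=C_2\log^3 t$: since $2\sigma^2+\tfrac23 R\lambda=O(\log^3 t)$ while $\lambda^2=C_3^2\log^4 t$, taking $C_3$ large enough gives $\mathbb{P}\big(\widehat{M}_n\ge C_3\log^2 t\ \text{for some}\ n\le t\big)\le t^{-100}$. On the complement of this event intersected with $\mathcal{G}$, which has probability at least $1-3t^{-98}$ (and $\le t^{-98}$ after the routine adjustment of the exponents in Lemma \ref{lema:cotaunidto} and Theorem \ref{teo:upperbounddeg}), we have $\widehat{M}_t\le C_3\log^2 t$, hence $\tilde{C}_t/t=M_t+m^2\sum_{r<t}\tfrac1r\le C_3\log^2 t+M_1+m^2(1+\log t)$; enlarging the constant to absorb lower-order terms and the finitely many small $t$ gives $\tilde{C}_t\le B_1' t\log^2 t$ and therefore $C_t\le B_1 t\log^2 t$.

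The main obstacle is exactly that the increments $\tilde{C}_{s+1}-\tilde{C}_s$ can be as large as $\Theta(d_{max}(G_s))\approx \Theta(\sqrt{s}\log t)$, so Azuma–Hoeffding is useless and a naive application of Freedman fails too: one must (i) freeze the process at $\tau$ so that the degree bounds hold surely along the martingale, and (ii) insert the crude bound $\tilde{C}_s=O(s^{3/2}\log t)$ — which costs nothing, being a direct consequence of the degree estimates of Section \ref{sec:degrees} — into the conditional quadratic variation. Without (ii) one only gets $V_t=O(\sqrt{t}\log t)$, and Freedman would then yield the much weaker conclusion $\tilde{C}_t=O(t^{3/2}\log t)$.
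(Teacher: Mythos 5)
Your proposal is correct and follows essentially the same route as the paper: pass to $\tilde{C}_t$, normalize by $t$, control the drift (the paper's $A_t=\Theta(\log t)$, your explicit compensator), bound the conditional quadratic variation via Lemma \ref{lema:varctil} together with the degree bounds of Lemma \ref{lema:cotaunidto} to get $V_t=O(\log^3 t)$ on the good event, and apply Freedman with $\lambda=\Theta(\log^2 t)$; your stopping-time freezing is just a slightly cleaner packaging of the paper's intersection with $B_t^c$. The only quibble is a harmless constant: the new vertex $s+1$ contributes an extra $m^2$ to $\tilde{C}_{s+1}$, so your drift bound should read $\mathbb{E}[\tilde{C}_{s+1}\mid G_s]\le\left(1+\tfrac{1}{s}\right)\tilde{C}_s+2m^2$, which changes nothing in the argument.
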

\begin{proof} We show the result for $\tilde{C}_t$, which is greater than $C_t$. To do this, we need to determine~$\mathbb{E}[d^2_{t+1}(j)| G_t]$.

As in proof of Proposition \ref{prop:martingaldt}, write 
\[
d_{t+1}(j) = d_t(j) + \sum_{k=1}^m \mathbb{1} \left\lbrace Y_{t+1}^{(k)} = j \right \rbrace
\]
and denote $\sum_{k=1}^m \mathbb{1} \left\lbrace Y_{t+1}^{(k)} = j \right \rbrace$ by $\Delta d_t(j)$. Thus, 
\[
d^2_{t+1}(j) = d^2_t(j)\left(1+ \frac{\Delta d_t(j)}{d_t(j)} \right)^2 = d^2_t(j) + 2d_t(j)\Delta d_t(j) + (\Delta d_t(j))^2 .
\]
Combining the above equation with (\ref{eq:yk}) and (\ref{eq:expecdt}), we get
\begin{equation}\label{eq:d2}
\mathbb{E}\left[ d^2_{t+1}(j) \middle| G_t\right]= d^2_t(j) + \frac{d^2_t(j)}{t}+\mathbb{E}\left[ (\Delta d_t(j))^2 \middle| G_t\right].
\end{equation}
Dividing the above equation by $t+1$, we get
\begin{equation}
\mathbb{E}\left[ \frac{d^2_{t+1}(j)}{t+1} \middle| G_t\right]= \frac{d^2_t(j)}{t}+\frac{\mathbb{E}\left[ (\Delta d_t(j))^2 \middle| G_t\right]}{t+1}, 
\end{equation}
which implies
\begin{equation}\label{eq:ctilde}
\mathbb{E}\left[\frac{\tilde{C}_{t+1}}{t+1} \middle| G_t\right] = \frac{\tilde{C}_t}{t} + \frac{m^2}{t+1} + \sum_{j=1}^t \frac{\mathbb{E}\left[ (\Delta d_t(j))^2 \middle| G_t\right]}{t+1}.
\end{equation}
It is straightforward to see that $\Delta d_t(j) \le (\Delta d_t(j))^2 \le m\Delta d_t(j)$, which implies
\[
\mathbb{E}\left[ (\Delta d_t(j))^2 \middle| G_t\right] = \Theta\left( \frac{d_t(j)}{t}\right).
\]
Thus, (\ref{eq:ctilde}) may be written as
\begin{equation}\label{eq:expeccs}
\mathbb{E}\left[ \frac{\tilde{C}_{t+1}}{t+1} \middle| G_t\right] = \frac{\tilde{C}_t}{t} + \Theta\left(\frac{1}{t}\right).
\end{equation}
Now, define 
\[
X_t := \frac{\tilde{C}_{t+1}}{t+1}.
\] 

Equation (\ref{eq:expeccs}) states that $X_t$ is a martingale up to a term of magnitude $\Theta\left(\frac{1}{t}\right)$. In order to apply martingale concentration inequalities, we decompose $X_t$ as in Doob's Decomposition theorem. $X_t$ can be written as $X_t = M_t + A_t$, in which $M_t$ is a martingale and $A_t$ is a predictable process. By Equation (\ref{eq:expeccs}), we have
\begin{equation}
A_t = \sum_{s=2}^t \mathbb{E}\left[X_s \middle| G_{s-1}\right] - X_{s-1} = \sum_{s=2}^t \Theta\left(\frac{1}{s}\right).  
\end{equation}
I.e., $A_t = \Theta\left(\log(t) \right)$ almost surely.

The remainder of the proof is devoted to controlling $X_t$'s martingale component using Freedman's Inequality. Once again, by Doob's Decomposition Theorem, we have
\[
M_t := X_0 + \sum_{s=2}^t X_s -\mathbb{E}\left[X_s \middle | G_{s-1}\right].
\]
Observe that $M_{t+1} = M_t + X_{t+1} -\mathbb{E}\left[X_{t+1} \middle | G_t\right]$, thus
\begin{equation}
\begin{split}
\left | \Delta M_s \right | &= \left| X_{s+1} -\mathbb{E}\left[X_{s+1} \middle | G_{s}\right]\right| \le \left | X_{s+1} - X_s \right| + \frac{b_9}{s} \\
& \le \left | \frac{\tilde{C}_{s+1} - \left(1+ \frac{1}{s}\right)\tilde{C}_s}{s+1}\right| + \frac{b_9}{s} \\
& \le b_{10} \frac{d_{max}(G_s)}{s} + b_{11}\frac{\tilde{C}_s}{s^2} + \frac{b_9}{s}.
\end{split}
\end{equation}
Since $\Delta\tilde{C_s}$ attains its maximum when the vertices of maximum degree in $G_s$ receive at least a new edge at time $s+1$. Furthermore, since $d_{max}(G_s) \le ms$ and $\tilde{C}_s \le m^2s^2$, there exists a constant $b_{12}$ such that $\max_{s\le t}\left | \Delta M_s \right | \le b_{12}$ almost surely.

Combining
\[
\left | \Delta M_s \right | \le \left | \frac{\tilde{C}_{s+1} - \left(1+ \frac{1}{s}\right)\tilde{C}_s}{s+1}\right| + \frac{b_9}{s}
\]
with Cauchy-Schwarz and Lemma \ref{lema:varctil}, we obtain positive constants $b_{13},b_{14}$ and $b_{15}$ such that
\begin{equation}\label{ineq:expecvarctil}
\begin{split}
\mathbb{E}\left[ (\Delta M_s)^2\middle| G_s\right] & \stackrel{CS}{\le} b_{13} \frac{\mathbb{E}\left[ (\Delta \tilde{C}_s)^2\middle| G_s\right]}{s^2} + b_{14}\frac{\tilde{C}^2_s}{s^4}+ \frac{b_{15}}{s^2} \\
& \stackrel{\textit{Lemma } \ref{lema:varctil}}{\le} b_{16}\frac{d_{max}(G_s)\tilde{C}_s}{s^3}+ b_{14}\frac{\tilde{C}^2_s}{s^4}+ \frac{b_{15}}{s^2}.
\end{split}
\end{equation}

Now, define $V_t$ as 
\[
V_t := \sum_{s=2}^t \mathbb{E}\left[ (\Delta M_s)^2\middle| G_s\right]
\] 
and call \textit{bad set} the event below
\[
B_t := \bigcup_{j = 1}^t \bigcup_{t_0 = j}^t \left\lbrace d_{t_0}(j) > b_8\sqrt{t_0}\log(t) \right\rbrace,
\]
observe that Lemma \ref{lema:cotaunidto} guarantees $\mathbb{P}(B_t) \le 2t^{-98}$. 

Also notice that~$\tilde{C_s} \le b_{17} d_{max}(G_s) s$ almost surely and in $B_t^c$ we have $d_{max}(G_s) \le b_8 \sqrt{s}\log(t)$ for all $s\le t$. Then, outside $B_t$ we have
\begin{equation}\label{ineq:vt}
\begin{split}
V_t & \stackrel{(\ref{ineq:expecvarctil})}{\le} \sum_{s=2}^t b_{16}\frac{d_{max}(G_s)\tilde{C}_s}{s^3}+ b_{14}\frac{\tilde{C}^2_s}{s^4}+ \frac{b_{15}}{s^2} \\
&  \le \sum_{s=2}^t \frac{b_{16}b^2_8b_{17}s^2\log^2(t)}{s^3} + \frac{b_{14}b^2_{17}s^3\log^2(t)}{s^4} + \frac{b_{15}}{s^2} \\
& \le b_{18}\log^3(t).
\end{split}
\end{equation}

So, by Freedman's inequality, we obtain
\[
\mathbb{P}\left( M_t > \l, V_t \le b_{18}\log^3(t) \right) \le \exp \left\lbrace - \frac{\l^2}{2b_{18}\log^3(t) + 2b_{12}\l/3} \right\rbrace.
\]
Therefore, if $\l = b_{19}\log^2(t)$ with $b_{19}$ large enough, we get
\begin{equation}
\mathbb{P}\left( M_t >  b_{19}\log^2(t), V_t \le b_{18}\log^3(t) \right) \le t^{-100}.
\end{equation}
The inequality (\ref{ineq:vt}) guarantees the following inclusion of events
\begin{equation}
B_t^c \subset \left\lbrace V_t \le b_{18}\log^3(t) \right \rbrace.
\end{equation}
And also,
\begin{equation}\label{eq:events}
\left\lbrace X_t \ge b_{21}\log^2(t) \right \rbrace\subset \left\lbrace M_t \ge (b_{21}-b_{20})\log^2(t) \right \rbrace.
\end{equation}
since $A_t\le  b_{20}\log(t)$ and $M_t \ge X_t - b_{20}\log(t)$.

Finally, 
\begin{equation*}
\begin{split}
\mathbb{P}\left( M_t >  b_{19}\log^2(t) \right) &= \mathbb{P}\left( M_t >  b_{19}\log^2(t), V_t \le b_{18}\log^3(t) \right)\\ & \quad + \mathbb{P}\left( M_t >  b_{19}\log^2(t), V_t > b_{18}\log^3(t) \right) \\
& \le t^{-100} + \mathbb{P}\left( B_t \right) \\ 
\mathbb{P}\left( M_t >  b_{19}\log^2(t) \right) & \le 3t^{-98},
\end{split}
\end{equation*}
proving the Theorem..
\end{proof}
We notice that from equation (\ref{eq:ctilde}) we may extract the recurrence below
\[
\mathbb{E}\left[ \tilde{C}_{t} \right] = \left( 1 + \frac{1}{t-1}\right)\mathbb{E}\left[ \tilde{C}_{t-1} \right] + c_0,
\]
in which $c_0$ is a positive constant depending on $m$ and $p$ only. Expanding it, we obtain
\[
\mathbb{E}\left[ \tilde{C}_{t} \right] = \prod_{s=1}^{t-1}\left( 1 + \frac{1}{s}\right)\mathbb{E}\left[ \tilde{C}_{1} \right] + c_0\sum_{s=1}^{t-1}\prod_{r=s}^{t-1}\left( 1 + \frac{1}{r}\right),
\]
which implies $\mathbb{E}[ \tilde{C}_{t}] = \Theta(t\log t)$. This means the upper bound for $\tilde{C}_t$ given by Theorem \ref{teo:cotasupcs} is exactly $\mathbb{E}[ \tilde{C}_{t}]\log(t)$. 

\subsection{The bootstrap argument} Obtaining bounds for $C_t$ requires some control of its quadratic variation, which requires bounds for the maximum degree and $C_t$, as in Lemma \ref{lema:varctil}. Applying some deterministic bounds and upper bounds on the maximum degree we were able to derive an upper bound for $C_t$, which is of order $\mathbb{E}[C_t] \log t$. To improve this bound and obtain the right order, we proceed as in proof of Theorem \ref{teo:cotasupcs}, but making use of the preliminary estimatejust discussed. This is what we call \textit{the bootstrap argument}. 

The result we obtain is enunciated in Theorem \ref{teo:ctweaklaw} and consist of a \textit{Weak Law of Large Numbers}, which states that $C_t$ divided by $t\log t$ actually converges in probability to a constant depending only on $m$. 

\begin{proof}[Proof of Theorem \ref{teo:ctweaklaw}]In proof of Theorem \ref{teo:cotasupcs}, we decomposed the process $X_t = \tilde{C_t}/t$ in two components: $M_t$ and ~$A_t$. The first part of the proof will be dedicated to showing that $M_t = o(\log(t))$, \textit{w.h.p}. Then we show that $A_t = (m^2+m)\log(t)$ also \textit{w.h.p.}

We repeat the proof given for Theorem \ref{teo:cotasupcs}, but this time we change our definition of \textit{bad set} to
\[
B_t = \bigcup_{s=\log^{1/2}(t)}^t \left\lbrace \tilde{C_s} \ge b_{20}s\log^2(s) \right\rbrace.
\]
By Theorem \ref{teo:cotasupcs} and union bound, $\mathbb{P}(B_t) \le \log^{-97/2}(t)$.  Observe that an upper bound for $\tilde{C_s}$ gives an upper bound for $d_{max}(G_s)$, since 
\[
d_{max}^2(G_s) \le \tilde{C_s} \implies d_{max}(G_s) \le \sqrt{\tilde{C_s}} \implies d_{max}(G_s) \le \sqrt{s}\log(s),
\]
when $\tilde{C_s} \le s\log^2(s)$.

Using (\ref{ineq:expecvarctil}) we have, in $B_t^c$,
\begin{equation}
\begin{split}
V_t & \le \sum_{s=1}^{t-1} b_{16}\frac{d_{max}(G_s)\tilde{C_s}}{s^3} + b_{14}\frac{\tilde{C_s}^2}{s^4} + \frac{b_{14}}{s^2} \\
& \le \sum_{s=1}^{\log^{1/2}(t)-1}b'_{16} + \sum_{s=\log^{1/2}(t)}^{t-1} b_{17}\frac{\sqrt{s}\log(s)s\log^2(s)}{s^3} + b_{18}\frac{s^2\log^4(s)}{s^4} + \frac{b_{14}}{s^2} \\
& \le b_{19}\log^{1/2}(t),
\end{split}
\end{equation}
since $d_{max}(G_s) \le m\cdot s$ and $\tilde{C_s} \le 2m^2 \cdot s^2$ for all $s$ and, in $B_t^c$, $d_{max}(G_s) \le \sqrt{b_{20}s}\log(s)$ and $\tilde{C_s} \le b_{20}s\log^2(s)$ for all $s \ge \log^{1/2}(t)$. Then, by Freedman's inequality,
\begin{equation}\label{eq:mt}
\mathbb{P}\left( M_t \ge \log^{1/4+\d}(t), V_t \le b_{19}\log^{1/2}(t)\right) = o(1).
\end{equation}

Recall equation (\ref{eq:ctilde})
\[
\mathbb{E}\left[\frac{\tilde{C}_{t+1}}{t+1} \middle| G_t\right] = \frac{\tilde{C}_t}{t} + \frac{m^2}{t+1} + \sum_{j=1}^t \frac{\mathbb{E}\left[ (\Delta d_t(j))^2 \middle| G_t\right]}{t+1}.
\]
Now, we recall from Lemma \ref{lema:probdeltadt} that for all $k \in \{ 1,...,m\}$
\[
\mathbb{P}\left( Y^{(k)}_{t+1} =v \middle | G_t \right) = \frac{d_t(v)}{2mt}.
\]
Furthermore, 
\[
\mathbb{P}\left( Y^{(k)}_{t+1} =v, Y^{(j)}_{t+1} =v  \middle | G_t \right) = O\left(\frac{d^2_t(v)}{t^2} \right).
\]
Thus,
\begin{equation}
\mathbb{E}\left[ (\Delta d_t(v))^2 \middle| G_t \right] = \frac{d_t(v)}{2t} + O\left(\frac{d^2_t(v)}{t^2} \right),
\end{equation}
which implies that
\[
\mathbb{E}\left[\frac{\tilde{C}_{t+1}}{t+1} \middle| G_t\right] = \frac{\tilde{C}_t}{t} + \frac{m^2+m}{t+1} + O\left(\frac{\tilde{C}_t}{t^3} \right),
\]
and consequently
\[
A_t = \sum_{s=2}^t \frac{m^2+m}{s+1} + O\left(\frac{\tilde{C}_s}{s^3} \right).
\]
As we have already noticed before, all the constants involved in the big-O notation do not depend on the time or the vertex.
From the above equation we deduce that, in $B_t^c$, 
\[
\sum_{s=2}^t O\left(\frac{\tilde{C}_s}{s^3} \right) \le b_{11}\sum_{s=2}^{\log^{1/2}(t)}\frac{\tilde{C}_s}{s^3} + \sum_{s=\log^{1/2}(t)}^{t}\frac{s\log^2(s) }{s^3} \le b_{12} \log(\log(t)).
\]
Thus, in $B_t^c$,
\[
A_t = (m^2+m)\log(t) + o(\log(t)).
\]

Finally, fix a small positive $\e$
\begin{equation}\label{eq:weaklaw}
\begin{split}
\mathbb{P}\left( \left| \frac{\tilde{C}_t}{t\log(t)} - m^2+m \right| > \e \right) & = \mathbb{P}\left( \left| \frac{M_t+A_t}{\log(t)} - m^2+m \right| > \e \right)\\
& \le \mathbb{P}\left( \left| \frac{M_t+A_t}{\log(t)} - m^2+m \right| > \e, B_t^c \right) + \mathbb{P}\left( B_t\right).
\end{split}
\end{equation}
We also have that $B_t^c \subset \{ V_t \le b_{19}\log^{1/2}(t)\}$, which, combined to (\ref{eq:mt}), implies
\[
\mathbb{P}\left( \left| \frac{M_t+A_t}{\log(t)} - m^2+m \right| > \e,M_t \ge \log^{1/4+\d}(t) ,B_t^c \right) = o(1).
\]
And recall that, in $B_t^c$, $M_t$ is at most $\log^{1/4+\d}(t)$ and $A_t = (m^2+m)\log(t)+o(\log(t))$, thus
\[
\mathbb{P}\left( \left| \frac{M_t+A_t}{\log(t)} - m^2+m \right| > \e,M_t < \log^{1/4+\d}(t) ,B_t^c \right) = 0
\]
for large enough $t$.

Recalling that $C_t = \tilde{C}_t/2 - mt $ we obtain the desired result.
\end{proof}

\subsection{Wrapping up}

Until here we devoted our efforts to properly control the number of cherries in $G_t$. Now, we combine these results with simple bounds for the number of triangles in $G_t$ to finally obtain the exact order of the global clustering.
\begin{proof}[Proof of Theorem \ref{thm:global}] By Theorem \ref{teo:ctweaklaw} we have 
\[
C_t  = \Theta\left(t\log(t)\right), \textit{w.h.p}.
\]
But, observe that number of triangles in $G_t$, $\D_{G_t}$, is bounded from above by $\binom{m}{2}t$. And note that every \textit{TF-step} we take increases $\D_{G_t}$ by one. Then,
\[
\D_{G_t} \ge Z_t = \sum_{s=1}^t T_s
\]
where $T_s$ is the number of \textit{TF-steps} we took at time $s$. Since all the choices concerning which kind of step we follow are independent, $T_s \sim bin(m-1,p)$~and~$Z_t \sim bin((m-1)t,p)$. By Chernoff Bounds, $Z_t \ge \delta(m-1)pt$, for a small $\delta$, \textit{w.h.p.} Thus,
\[
\D_{G_t} = \Theta(t), \textit{w.h.p},
\]
which conclude the proof.
\end{proof}

\section{Final comments on clustering}\label{sec:finalcomm}
We end this paper by comparing the two clustering coefficients from a different perspective than in Section \ref{sec:heuristics}. Recall that $\mathcal{C}^{loc}_{G_t }$ is an unweighted average of local clustering coefficients.
\[
\mathcal{C}^{loc}_{G_t } := \frac{1}{t}\sum_{v\in G_t} \mathcal{C}_{G_t}(v) .
\]
On the other hand, $\mathcal{C}_{G_t}^{glo}$ is a weighted average, where the weight of vertex $v$ is the number of cherries that it belongs to,
\begin{equation}\label{eq:newc1}
\mathcal{C}^{glo}_{G_t } = 3\times \frac{\sum_{v\in G_t} \mathcal{C}_{G_t}(v) \binom{d_t(v)}{2}}{\sum_{v\in G_t} \binom{d_t(v)}{2}}
\end{equation}
Thus the weight of $v$ in  $\mathcal{C}^{glo}_{G_t }$ is basically proportional to the square of the degree. This skews the distribution of weights towards high-degree nodes. 
The clustering of the high degree vertices is the reason why the two coefficients present so distinct behavior. 

We will show below that $\mathcal{C}_{G_t}(v)$ for a vertex $v$ of high degree $d$ is of order $d^{-1}$, which explains why $\mathcal{C}^{glo}_{G_t }$ goes to zero. Recall that the \textit{r.v.} $e_t(\Gamma_v)$ counts the number of edges between the neighbors of $v$. Due to the definition of our model, one can only increse $e_t(\Gamma_v)$ by one if  $d_t(v)$ is also increased by at least one unit. Since $e_t(\Gamma_v)$ can only increase by $m$ units in each time step, we have:
\[
e_t(\Gamma_v) \le m\,d_t(v),
\]
which implies an upper bound for $\mathcal{C}_{G_t}(v)\approx e_t(\Gamma_v)/d_t(v)^2$ of order $d_t^{-1}(v)$. The next proposition gives a lower bound of the same order.
\begin{proposicao}Let $v$ be a vertex of $G_t$. Then, there are positive constants, $b_1$ and $b_2$, such that
\[
\P\left(\mathcal{C}_{G_t}(v) \le  \frac{b_1}{d_t(v)}  \middle |  d_t(v) \ge b_2 \log(t) \right) \le t^{-100}.
\]
\end{proposicao}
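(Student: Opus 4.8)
The plan is to show that a high-degree vertex $v$ lies in $\Omega\!\left(d_t(v)\right)$ triangles with overwhelming probability; since $\mathcal{C}_{G_t}(v)=\Delta_{G_t}(v)/\binom{d_t(v)}{2}$, this forces $\mathcal{C}_{G_t}(v)=\Omega(1/d_t(v))$, which is the claim for a suitable $b_1$. Throughout, vertex $v$ is born at time $v$; I may assume $v\ge 2$, the case $v=1$ being identical after discarding the initial self-loop.

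The first step is to reduce the triangle count to a martingale. Call time step $s+1$ \emph{good} if the first (PA) edge of the new vertex $s+1$ attaches to $v$ and the second edge is a TF-step; let $G_s$ be the indicator of this event and $T_s:=\sum_{r=v}^{s-1}G_r$. On a good step the second edge joins $s+1$ to a neighbour $w$ of $v$, so $\{v,w,s+1\}$ is a triangle through $v$, and since the apex $s+1$ is distinct for distinct steps, $\Delta_{G_t}(v)\ge T_t$. Using the Claim inside Lemma~\ref{lema:probdeltadt} (for $i=1$) and that the TF/PA coin $\xi^{(2)}_{s+1}$ is independent of $G_s$ and of $Y^{(1)}_{s+1}$, one gets $\mathbb{E}[G_s\mid G_s]=p\,d_s(v)/(2ms)=\tfrac{p}{m}\,\mathbb{E}[\Delta d_s(v)\mid G_s]$, the last equality by (\ref{eq:expecdt}). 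Hence
\[
Z_s:=T_s-\tfrac{p}{m}\bigl(d_s(v)-m\bigr)
\]
is a martingale with $Z_v=0$, increments bounded by $2$, and — since $(\Delta Z_s)^2\le(\Delta d_s(v))^2\le m\,\Delta d_s(v)$ — quadratic variation $V_t:=\sum_{s=v}^{t-1}\mathbb{E}[(\Delta Z_s)^2\mid G_s]\le m\sum_{s=v}^{t-1}\frac{d_s(v)}{2s}$. It then suffices to prove that, with probability $\ge 1-t^{-100}$ on $\{d_t(v)\ge b_2\log t\}$, one has $Z_t\ge-\tfrac{p}{4m}d_t(v)$, which gives $T_t\ge\tfrac{p}{2m}d_t(v)$ once $d_t(v)\ge 4m$.

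The crux — and the main obstacle — is controlling $V_t$, i.e. bounding $\sum_s d_s(v)/s$ by $O(d_t(v))$ on a likely event: the naive bound $\sum_s d_s(v)/s=O(d_t(v)\log t)$ coming from $d_s(v)\le d_t(v)$ loses exactly the logarithmic factor that makes Freedman's inequality fail when $d_t(v)\asymp\log t$. To do better I would introduce the Doob martingale $W_s:=d_s(v)-m-\sum_{r=v}^{s-1}\frac{d_r(v)}{2r}$, so that $\sum_{s=v}^{t-1}\frac{d_s(v)}{2s}=(d_t(v)-m)-W_t$ and, by the same second-moment estimate, the quadratic variation of $W$ is also at most $m\bigl[(d_t(v)-m)-W_t\bigr]$. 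Then I would show $W_t\ge-d_t(v)$ with probability $\ge 1-\tfrac12t^{-100}$ on $\{d_t(v)\ge b_2\log t\}$ by peeling: cover this event by the dyadic slabs $E_\ell=\{2^\ell\le d_t(v)<2^{\ell+1}\}$, $2^\ell\ge b_2\log t$ (finitely many since $d_t(v)\le mt$), split $\{-W_t\ge d_t(v)\}$ further into slabs $\{2^i\le-W_t<2^{i+1}\}$, observe that on $E_\ell\cap\{2^i\le-W_t<2^{i+1}\}$ the quadratic variation of $W$ is $\le m\,2^{i+2}$, and apply Theorem~\ref{teo:azumaquadratic} to $-W$ to bound each piece by $\exp(-c\,2^i)$ with $c=c(m)>0$; summing the geometric series over $i\ge\ell$ and then over $\ell$ yields $(\log t)\,t^{-c(m)b_2}\le t^{-100}$ for $b_2$ large in terms of $m$. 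On $\{W_t\ge-d_t(v)\}$ this gives $V_t\le 2m\,d_t(v)$.

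It then remains to run exactly the same peeling with Theorem~\ref{teo:azumaquadratic} applied to $-Z$: on $E_\ell\cap\{W_t\ge-d_t(v)\}$ we have $-Z_t\ge\tfrac{p}{4m}2^\ell$, $V_t\le 2^{\ell+2}m$, and increments $\le 2$, so each slab contributes $\exp(-c'2^\ell)$ with $c'=c'(m,p)>0$, and the sum over $\ell$ with $2^\ell\ge b_2\log t$ is $\le t^{-100}$ for $b_2$ large in terms of $m,p$. Combining the two bad events, with probability $\ge 1-t^{-100}$ on $\{d_t(v)\ge b_2\log t\}$ we obtain $\mathcal{C}_{G_t}(v)\ge T_t/\binom{d_t(v)}{2}\ge (p/m)/(d_t(v)-1)>\tfrac{p}{2m}/d_t(v)$, so $b_1:=p/(2m)$ works. (Strictly, the argument bounds the joint probability $\mathbb{P}\bigl(\mathcal{C}_{G_t}(v)\le b_1/d_t(v),\,d_t(v)\ge b_2\log t\bigr)$ by an arbitrarily large negative power of $t$, whence the stated conditional bound as well as a union bound over all $v\le t$.) Everything outside the quadratic-variation control of the third paragraph is routine bookkeeping with Freedman's inequality.
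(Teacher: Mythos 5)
Your route is genuinely different from the paper's and its probabilistic core is sound. You lower-bound the number of triangles at $v$ by the count $T_t$ of ``good'' steps (first choice equals $v$, second coin is TF), note that $Z_s=T_s-\tfrac{p}{m}(d_s(v)-m)$ and the Doob part $W_s$ of $d_s(v)$ are martingales started at $0$ with bounded increments, and control their quadratic variations through each other via a dyadic peeling plus Freedman's inequality (Theorem~\ref{teo:azumaquadratic}); I checked the key estimates ($|\Delta Z_s|\le \Delta d_s(v)$, $V^{W}_t\le m[(d_t(v)-m)-W_t]$, the slab bounds $V\le m2^{i+2}$, and the geometric summation giving $t^{-cb_2}$ up to a $\log t$ factor) and they work, yielding $\mathbb{P}\bigl(\mathcal{C}_{G_t}(v)\le \tfrac{p}{2m}\,d_t(v)^{-1},\, d_t(v)\ge b_2\log t\bigr)\le t^{-100}$ for $b_2$ large. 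The paper instead conditions on the degree trajectory $\omega$ of $v$ (when and how many times $v$ was hit, but not by which mechanism), uses (\ref{eq:pns})--(\ref{eq:pss}) to show that at each such step the first choice is $v$ with conditional probability at least a constant $\delta$, deduces that the number $d^{(1)}_t(v)$ of first-choice hits dominates a Binomial$(d_\omega(v)/m,\delta)$, and finishes with two Chernoff bounds ($e_t(\Gamma_v)$ dominating a Binomial$(d^{(1)}_t(v),p)$). Your argument is heavier machinery but gives an explicit constant and directly the joint (union-boundable) form; the paper's is more elementary and, crucially, is intrinsically conditional.

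That last distinction is where your proposal has a genuine gap: the statement to be proved is a conditional probability, and your closing remark that the joint bound ``whence the stated conditional bound'' does not follow. Since $\mathbb{P}(A\mid B)=\mathbb{P}(A\cap B)/\mathbb{P}(B)$, a bound $\mathbb{P}(A\cap B)\le t^{-K}$ (even for arbitrarily large fixed $K$) only gives the conditional estimate if $\mathbb{P}(B)=\mathbb{P}(d_t(v)\ge b_2\log t)$ is polynomially bounded below, uniformly in $v$. It is not: for a vertex $v$ born shortly before time $t$ this probability is super-polynomially small (reaching degree $b_2\log t$ in $O(\log t)$ steps costs roughly $(\log t/t)^{c\log t}$), and it is exactly zero once $m(t-v)+m<b_2\log t$. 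So no power-of-$t$ bound on the joint probability can deliver the stated conditional bound for all $v$. Freedman's inequality gives only unconditional tail bounds, so this cannot be patched by choosing $b_2$ larger; to prove the proposition as stated you need an argument that is uniform over the conditioning event, which is exactly what the paper's conditioning on $\omega$ accomplishes (the failure probability is at most $t^{-100}$ for every trajectory compatible with $\{d_t(v)\ge b_2\log t\}$, hence also conditionally). If you rerun your lower bound for $T_t$ conditionally on the degree history, the conditionally independent coin structure reduces it essentially to the paper's binomial-domination argument; alternatively, keep your proof but restate the conclusion as the joint/high-probability bound, which is what your method actually establishes.
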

This proposition does not prove our clustering estimates, but seems interesting in any case. 

\begin{proof} [Proof of the Proposition] Observe that if we choose $v$ and take a \textit{TF-step} thereafter, we increase $e_t(\Gamma_v)$  by one. Then, if we look only at times in which this occurs, $e_t(\Gamma_v)$ must be greater than a binomial random variable with parameters: \textit{number of times we choose $v$ at the first choice} and $p$. Since all the choices concerning the kind of step we take are made independently of the whole process, we just need to prove that \textit{number of times we choose $v$ at the first choice}, denoted by $d^{(1)}_t(v)$, is proportional to $d_t(v)$ \textit{w.h.p}. 

Recall that $Y^{(1)}_s$ indicates the vertex chosen at time $s$ at the first of our $m$ choices. The random variable $d^{(1)}_t(v)$ can be written in terms of $Y^{(1)}$'s just as follows
\begin{equation}
d^{(1)}_t(v) = \sum_{s=v+1}^t \mathbb{1}\left\lbrace Y^{(1)}_s = v \right\rbrace.
\end{equation}
We first claim that if $d_t(v)$ is large enough, a positive fraction of its value must come from~$d_t^{(1)}(v)$.
\begin{claim} There exists positive constants $b_1$ and $b_2$ such that
\[
\P \left(d^{(1)}_t(v) \le b_1 d_t(v) \middle| d_t(v) \ge b_2\log(t) \right) \le t^{-100}. 
\]
\end{claim}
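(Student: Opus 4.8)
The plan is to avoid estimating $d^{(1)}_t(v)$ head-on and instead compare it with $N_t$, the number of time steps up to $t$ at which the degree of $v$ strictly increases, for which we have the crude but robust sandwich $\tfrac1m\bigl(d_t(v)-m\bigr)\le N_t\le d_t(v)-m$ (each step raises $d(v)$ by between $1$ and $m$ once it raises it at all). The heart of the matter is that at each of these $N_t$ ``active'' steps, the very first of the $m$ attachment choices targets $v$ with conditional probability at least $1/m$.

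First I would record the one‑step estimate. Write $\sF_{s-1}$ for the history just before step $s$, and set $U_s:=\mathbb{1}\{Y^{(1)}_s=v\}$ and $W_s:=\mathbb{1}\{\Delta d_{s-1}(v)\ge 1\}$, so that $U_s\le W_s$, $d^{(1)}_t(v)=\sum_{s=v+1}^t U_s$ and $N_t=\sum_{s=v+1}^t W_s$. The model gives $\P(U_s=1\mid\sF_{s-1})=\tfrac{d_{s-1}(v)}{2m(s-1)}$, while $\P(W_s=1\mid\sF_{s-1})\le\Ed[\Delta d_{s-1}(v)\mid\sF_{s-1}]=\tfrac{d_{s-1}(v)}{2(s-1)}$ by Lemma \ref{lema:probdeltadt}. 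Dividing, $\P(U_s=1\mid\sF_{s-1},W_s=1)\ge 1/m$ whenever the conditioning event has positive probability.

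Next I would run the clock on the active steps. Let $S_0:=v$ and let $S_j$ be the $j$-th step with $W_{S_j}=1$ (a stopping time for $(\sF_s)$; put $S_j:=\infty$ if there is none, with the convention $U_{S_j}:=1$ on $\{S_j=\infty\}$). A direct computation conditioning on $\sF_{S_{j-1}}$ and summing over the value of $S_j$ (using $\{S_j=s\}=A_s\cap\{W_s=1\}$ with $A_s\in\sF_{s-1}$) promotes the one‑step estimate to $\Ed[U_{S_j}\mid\sF_{S_{j-1}}]\ge 1/m$. Hence $\bigl(\sum_{i=1}^j(\tfrac1m-U_{S_i}),\ \sF_{S_j}\bigr)_{j\ge 0}$ is a supermartingale with increments bounded by $1$, and the Azuma--Hoeffding inequality gives a constant $c_m>0$ with $\P\bigl(\sum_{i=1}^n U_{S_i}\le \tfrac{n}{2m}\bigr)\le e^{-c_m n}$ for every $n$. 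Since $U_s=0$ whenever $W_s=0$, we have $d^{(1)}_t(v)=\sum_{i=1}^{N_t}U_{S_i}$; on $\{d_t(v)\ge b_2\log t\}$ the bounds $d_t(v)\le 2mN_t$ and $N_t\ge n_0:=\lfloor \tfrac{b_2}{2m}\log t\rfloor$ (valid for large $t$) show that $\{d^{(1)}_t(v)\le b_1 d_t(v)\}$ with $b_1:=\tfrac1{4m^2}$ forces $\sum_{i=1}^{N_t}U_{S_i}\le\tfrac{N_t}{2m}$, so partitioning over the value of $N_t$,
\[
\P\bigl(d^{(1)}_t(v)\le b_1 d_t(v),\ d_t(v)\ge b_2\log t\bigr)\le\sum_{n= n_0}^{t}\P\Bigl(\sum_{i=1}^n U_{S_i}\le\tfrac{n}{2m}\Bigr)\le\sum_{n\ge n_0}e^{-c_m n}\le\frac{e^{-c_m n_0}}{1-e^{-c_m}}.
\]
Choosing $b_2$ large (depending only on $m$) makes this at most $t^{-100}$ for all large $t$; as $\P(d_t(v)\ge b_2\log t)$ stays bounded away from $0$, the conditional form stated in the Claim follows.

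The main obstacle, and the reason for the detour through $N_t$, is that the ``obvious'' martingale $d^{(1)}_t(v)-\tfrac1m d_t(v)$ has $\Theta(t)$ increments of size $\Theta(1)$, so Azuma only controls its fluctuations at scale $\sqrt{t\log t}$ — hopeless when $d_t(v)$ is merely of order $\log t$, which is exactly the regime of interest. Re‑parametrising by the (order $d_t(v)$) active steps fixes the ``number of trials'', but the price is the somewhat delicate handling of the stopping times $S_j$ — in particular the decomposition of $\{S_j=s\}$ and the bookkeeping around $\{S_j=\infty\}$ — needed to pass from $\P(U_s=1\mid\sF_{s-1},W_s=1)\ge 1/m$ to $\Ed[U_{S_j}\mid\sF_{S_{j-1}}]\ge 1/m$; everything after that is routine Chernoff/Azuma estimation.
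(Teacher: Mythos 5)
Your reduction to the ``active steps'' is sound as far as it goes: the one-step bound $\P(U_s=1\mid \sF_{s-1},W_s=1)\ge 1/m$, its promotion through the stopping times $S_j$, and the Azuma bound for the stopped supermartingale do give a correct bound on the \emph{joint} probability $\P\left(d^{(1)}_t(v)\le b_1 d_t(v),\ d_t(v)\ge b_2\log t\right)$ of polynomial size, by a route genuinely different from the paper's. The gap is the very last step, where you divide by $\P(d_t(v)\ge b_2\log t)$ and assert this probability is bounded away from $0$. That is false for a general vertex $v$ of $G_t$, which is exactly the setting of the Claim (and of the Proposition it serves): for vertices added shortly before time $t$ the event $\{d_t(v)\ge b_2\log t\}$ is impossible or has super-polynomially small probability (a vertex born at time $t-k$ has degree at most $m(k+1)$, and forcing degree of order $\log t$ onto a very young vertex costs far more than any fixed power of $t$). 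For such $v$ a joint bound of size $t^{-101}$ says nothing about the conditional probability, which is the quantity the Claim asserts and the quantity actually needed, since the whole point is to control the clustering of high-degree vertices however unlikely the high degree is.

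The paper sidesteps this by conditioning on the entire degree trajectory $\omega$ of $v$ (when and how many times $v$ was chosen, but not by which mechanism): for every $\omega$ compatible with $\{d_t(v)\ge b_2\log t\}$, the indicators $\mathbb{1}\{Y^{(1)}_s=v\}$ at the active times are conditionally independent with success probability bounded below, so $d^{(1)}_t(v)$ dominates a binomial with at least $d_{\omega}(v)/m\ge (b_2/m)\log t$ trials; Chernoff then gives a bound that is uniform over all compatible $\omega$, and averaging over $\omega$ yields the conditional statement with no lower bound on $\P(d_t(v)\ge b_2\log t)$ ever needed. Your argument can be repaired in the same way --- run your one-step estimate conditionally on $\omega$ --- but note that once the trajectory is fixed the number of active steps is deterministic and the stopping-time/supermartingale machinery becomes unnecessary, at which point you are essentially back to the paper's proof.
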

\begin{claimproof}To prove the claim we condition on all possible trajectories of $d_t(v)$. In this direction, let $\omega$ be an event describing when $v$ was chosen and how many times at each step. We have to notice that $\omega$ does not record whether $v$ was chosen by a \textit{PA-step} or a \textit{TF-step}. The event $\omega$ can be regarded as a vector in $\{0,1,...,m \}^{t-v-1}$ such that $\omega(s) = k$ means we chose $v$ $k$-times at time $s$. For each $\omega$, let $d_{\omega}(v)$ be the degree of $v$ obtained by the sequence of choices given by $\omega$. 

Recall the Equations (\ref{eq:pns}) and (\ref{eq:pss}) which states that
\begin{equation*}
\begin{split}
\mathbb{P}_t\left( Y_{t+1}^{(i+1)}=v \middle | Y_{t+1}^{(i)}\neq v \right) & = \frac{d_t(v)}{2mt} + O\left( \frac{d^2_t(v)}{t^2} \right)
\end{split}
\end{equation*}
and
\begin{equation*}
\begin{split}
\mathbb{P}_t\left( Y_{t+1}^{(i+1)}=v \middle | Y_{t+1}^{(i)} = v \right) & = (1-p)\frac{d_t(v)}{2mt}.
\end{split}
\end{equation*}
For any $\omega$ such that $\omega(s) = k \ge 1$, we may show, using (\ref{eq:pns}) and (\ref{eq:pss}), that there exists a positive constant $\delta$ depending only on $m$ and $p$, such that
\begin{equation}
\P\left( Y^{(1)}_s = v \middle| \omega \right) \ge \delta.
\end{equation}
Furthermore, given $\omega$, the random variables $\mathbb{1}\{Y^{(1)}_s = v \}$ are independent. This implies that, given $\omega$, the random variable $d_t^{(1)}(v)$ dominates stochastically another random variable following a binomial distribution of parameters $d_{\omega}(v)/m$ and $\delta$. Thus, by Chernoff bounds, we can choose a small $b_1$ such that
\[
\P \left( d_t^{(1)}(v) \le b_1 d_{\omega}(v) \middle | \omega \right) \le \exp \left( - d_{\omega}(v) \right).
\]
Since we are on the event $D_t := \{d_t(v) \ge b_2\log(t) \}$, all $d_{\omega}(v) \ge b_2\log(t)$ for some $b_2$ that can be chosen in a way such that
\[
\P \left( d_t^{(1)} \le b_1 d_{\omega}(v) \middle | \omega \right) \le t^{-100}
\]
for all $\omega$ compatible with $D_t$.
Finally, to estimate $\left\lbrace d^{(1)}_t(v) \le b_1 d_t(v)\right\rbrace$, we condition on all possible history of choices $\omega$
\begin{equation}
\begin{split}
\P \left(d^{(1)}_t(v) \le b_1 d_t(v) \middle| D_t \right) & = \sum_{\omega} \P \left(d^{(1)}_t(v) \le b_1 d_t(v) \middle|  \omega, D_t \right)\P \left(\omega \middle| D_t \right) \le t^{-100}
\end{split}
\end{equation}
and this proves the claim.
\end{claimproof}

As we observed at the beginning, $e_t(\Gamma_v)$ dominates a random variable $bin(d^{(1)}_t(v), p)$. And by the claim, $d^{(1)}_t(v)$ is proportional to $d_t(v)$, \textit{w.h.p.} Using Chernoff Bounds, we obtain the result.
\end{proof}
\appendix

\section{Power law degree distribution}

\begin{lema}[Lemma 3.1 \cite{ChungLu}]\label{lema:convseq}
Let $a_t$ be a sequence of positive real numbers satisfying the following recurrence relation
\[ a_{t + 1} = \left( 1 - \frac{b_t}{t} \right) a_t + c_t. \]
Furthermore, suppose $b_t \rightarrow b > 0$ and $c_t \rightarrow c$, then
\[
\lim_{t \rightarrow \infty}\frac{a_t}{t} = \frac{c}{1+b}.
\]
\end{lema}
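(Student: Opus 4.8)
\section*{Proof proposal for Lemma \ref{lema:convseq}}

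The plan is to reduce the statement to showing that the error $d_t := a_t - \frac{c}{1+b}\,t$ is $o(t)$, and then to control $d_t$ through the explicit solution of its linear recursion. First I would substitute $a_t = d_t + \frac{c}{1+b}\,t$ into the given recurrence; a one-line computation gives
\[
d_{t+1} = \left(1 - \frac{b_t}{t}\right)d_t + \varepsilon_t, \qquad \varepsilon_t := c_t - \frac{c}{1+b}(1+b_t),
\]
and since $b_t \to b$, $c_t \to c$ we have $\varepsilon_t \to 0$. Thus the problem becomes: a scalar linear recursion with an eventually contracting multiplier $1 - b_t/t < 1$ and a forcing term tending to $0$ must produce $d_t = o(t)$.

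Next I would fix a threshold $t_0$ large enough that $b_r \ge \alpha$ for all $r \ge t_0$, where $\alpha$ is any fixed constant in $(0,b)$ (possible because $b_r \to b > 0$); in particular $0 < 1 - b_r/r < 1$ for $r \ge t_0$. Iterating the recursion from $t_0$ yields the closed form
\[
d_t = P(t_0,t)\,d_{t_0} + \sum_{s=t_0}^{t-1} P(s+1,t)\,\varepsilon_s, \qquad P(a,b) := \prod_{r=a}^{b-1}\left(1 - \frac{b_r}{r}\right).
\]
The key elementary estimate is $P(a,b) \le (a/b)^{\alpha}$ for $t_0 \le a \le b$, which follows from $1-x \le e^{-x}$ together with $b_r \ge \alpha$ and $\sum_{r=a}^{b-1} 1/r \ge \log(b/a)$.

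To finish, given $\eta > 0$ I would pick $t_1 \ge t_0$ with $|\varepsilon_s| \le \eta$ for all $s \ge t_1$, restart the closed form at $t_1$, and bound
\[
|d_t| \le \left(\tfrac{t_1}{t}\right)^{\alpha}|d_{t_1}| + \eta\sum_{s=t_1}^{t-1}\left(\tfrac{s+1}{t}\right)^{\alpha} \le \left(\tfrac{t_1}{t}\right)^{\alpha}|d_{t_1}| + C_\alpha\,\eta\, t,
\]
using $\sum_{s \le t} s^{\alpha} = O(t^{\alpha+1})$, where $C_\alpha$ depends only on $\alpha$. Dividing by $t$ and letting $t \to \infty$ gives $\limsup_t |d_t|/t \le C_\alpha\,\eta$, and since $\eta$ is arbitrary we get $d_t/t \to 0$, i.e. $a_t/t \to c/(1+b)$.

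There is no deep obstacle here: the argument is entirely elementary, and the only point needing care is the bookkeeping with the two thresholds $t_0$ (to make the factors positive and to get the polynomial decay of $P$) and $t_1$ (to make $\varepsilon_s$ small), ensuring the finite ``head'' of the recursion is absorbed cleanly into the initial value $d_{t_1}$. One could alternatively work directly with $r_t := a_t/t$, for which the recursion reads $r_{t+1} - r_t = \frac{1}{t+1}\big(c_t - (1+b_t)r_t\big)$, a Robbins--Monro--type scheme converging to the equilibrium $c/(1+b)$; but the $d_t$ route above is the most transparent, and this lemma is in any case standard (it is Lemma 3.1 of \cite{ChungLu}).
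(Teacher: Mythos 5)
Your proof is correct. For comparison: the paper itself gives no proof of this lemma --- it is stated as Lemma 3.1 of Chung and Lu \cite{ChungLu} and used as a black box --- so your write-up supplies a self-contained argument where the paper relies on a citation. The route you take (subtract the conjectured linear term, observe that $d_t = a_t - \tfrac{c}{1+b}\,t$ satisfies the same recursion with a forcing term $\varepsilon_t \to 0$, solve the recursion explicitly past a threshold, and kill the propagator with the estimate $\prod_{r=a}^{b-1}\left(1-\tfrac{b_r}{r}\right)\le (a/b)^{\alpha}$ for fixed $0<\alpha<b$) is the standard elementary one, and the steps check out: the two-threshold bookkeeping is handled correctly, and since $\alpha$ is fixed before $\eta$ is sent to $0$, the bound $\sum_{s\le t}s^{\alpha}=O(t^{\alpha+1})$ does give $\limsup_t |d_t|/t\le C_\alpha\,\eta$ for every $\eta>0$, hence $d_t=o(t)$ and $a_t/t\to c/(1+b)$. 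One phrase to tighten: $b_r\ge\alpha$ alone yields $1-b_r/r<1$ but not $1-b_r/r>0$; you also need $t_0$ large enough that $b_r<r$ for $r\ge t_0$ (automatic because $b_r\to b<\infty$, but worth stating), and this positivity of the factors is exactly what licenses passing to absolute values in the closed form and applying $1-x\le e^{-x}$ factorwise to the product.
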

\begin{proof}[Proof of Theorem \ref{teo:powerlaw}] We divide the proof into two parts. Part \textit{(a)} is the power law for the expected value of the proportion of vertices with degree $d$. Part \textit{(b)} is the concentration inequalities $N_t(d)$.

\textit{Proof of part (a).}The is essentially the same gave in Section 3.2 of  \cite{ChungLu}. The key step is obtain a recurrence relation involving $\mathbb{E}[N_t(d)]$ which has the same form of that required by Lemma \ref{lema:convseq}.
To obtain the recurrence relation, observe that $N_{t+1}(d)$ can be written as follow 
\begin{equation}\label{eq:recuntd}
 N_{t+1} ( d) = \sum_{v \in N_t ( d)} \mathbbm{1}_{\{ \Delta d_t (v) = 0
     \}} + \sum_{v \in N_t ( d - 1)} \mathbbm{1}_{\{ \Delta d_t (v) = 1 \}} +
     \ldots + \sum_{v \in N_t ( d - m)} \mathbbm{1}_{\{ \Delta d_t (v) = m \}}.
\end{equation}
Taking the conditional expected value with respect to $G_t$ on the above equation, applying Lemma \ref{lema:probdeltadt}  and recalling that~$N_t(d) \le t$, we obtain
\[ 
\mathbb{E} [ N_{t+1} ( d) | G_t ] = N_t ( d) \left[ 1 - \frac{d}{2t} +O\left(\frac{d^{2}}{t^{2}}\right) \right] + N_t ( d -1) \left[ \frac{( d - 1)}{2t} + O\left(\frac{(d-1)^{2}}{t^{2}}\right) \right] + O \left( \frac{1}{t} \right) .
\]
Finally, taking the expected value on both sides, denoting $\mathbb{E}N_t (d)$ by $a_t^{( d)}$, we have
  \begin{equation}\label{eq:ad}
    a_{t + 1}^{( d)} = \left[ 1 - \frac{\frac{d}{2} + O \left(
    \frac{d^2}{t} \right)}{t} \right] a_t^{( d)} + a_t^{( d - 1)} \left[
    \frac{\frac{d - 1}{2} + O \left( \frac{(d-1)^2}{t} \right)}{t} \right]
    + O \left( \frac{1}{t} \right).
  \end{equation}
From here, the proof follows by an induction on $d \ge m$ and application of Lemma \ref{lema:convseq}, assuming~$\frac{\mathbb{E}N_t (d -1)}{t} \longrightarrow D_{d-1}$, which gives us
  \[ \frac{a_t ( d)}{t} \longrightarrow \frac{D_{d - 1} \frac{( d - 1)}{2}}{1
     + d / 2} = D_{d-1} \frac{d - 1}{2 + d} = : D_d \]
and this gives us that 
 \[ D_d = \frac{2}{2 + m} \prod_{k = m + 1}^d \frac{( k - 1)}{k + 2} = \frac{2(m+3)(m+1)}{(d+3)(d+2)(d+1)}
  \]
  which proves the part \textit{(a)}.

\textit{Proof of part (b).} The proof is in line with proof of Theorem 3.2 in \cite{ChungLu}. For this reason we show only that following process
\[
X_t^{(d)} := \frac{N_t(d) - D_dt + 16d\cdot c\cdot \sqrt{t}}{\psi_d(t)},
\]
in which $\psi_d(t)$ is defined as 
\[
\psi_d(t) := \prod_{s=d}^{t-1}\left( 1 -\frac{d}{2s}\right)
\]
is a submartingale and we give an upper bound for its variation.

As in Theorem 3.2 of \cite{ChungLu}, the proof follows by induction on $d$.\\
\textit{Inductive step}:  Suppose that for all $d'\le d-1$ we have
\begin{equation}\label{ineq:claimlower}
\mathbb{P}\left( N_t(d') \le D_{d'}t - 16d'\cdot c\cdot \sqrt{t} \right) \le (t+1)^{d'-m}e^{-c^2}.
\end{equation}
Recalling that 
\[
\mathbb{E}\left[ N_{t+1}(d) \middle | G_t\right] = \left(1 - \frac{d}{2t} + O\left( \frac{d^2}{t^2} \right) \right)N_t(d)+\frac{(d-1)N_t(d-1)}{2t} + O\left( t^{-1} \right)
\]
we have the following recurrence relation
\begin{equation}\label{ineq:psix}
\begin{split}
\mathbb{E}\left[\psi_d(t+1)X_t^{(d)} \middle | G_t\right] & \ge \left(1 - \frac{d}{2t} \right)N_t(d)+\frac{(d-1)N_t(d-1)}{2t}\\
& +O\left( t^{-1}\right) + 16d\cdot c\cdot \sqrt{t} - D_d(t+1).
\end{split}
\end{equation}
The inductive hypothesis assure us that
\[
N_t(d-1) \ge D_{d-1}t - 16(d-1)\cdot c\cdot \sqrt{t},
\]
with probability at least $1-(t+1)^{d-1-m}e^{-c^2}$. Thus, returning on (\ref{ineq:psix}),
\begin{equation}\label{ineq:psix2}
\begin{split}
\mathbb{E}\left[\psi_d(t+1)X_t^{(d)} \middle | G_t\right] & \ge \left(1 - \frac{d}{2t} \right)N_t(d) \\
& + \frac{(d-1)D_{d-1}}{2} -D_{d}(t+1)+ 16d\cdot c\cdot \sqrt{t} + O\left( t^{-1}\right).
\end{split}
\end{equation}
But, observe that about the \textit{r.h.s} of the above inequality, we have
\begin{equation}
\begin{split}
\frac{(d-1)D_{d-1}}{2} -D_{d}(t+1)+ 16d\cdot c\cdot \sqrt{t} + O\left( t^{-1}\right) & \ge \left(1 - \frac{d}{2t} \right)\left( -D_dt +16d\cdot c\cdot \sqrt{t}\right) \\
& \iff \\
\frac{(d-1)D_{d-1}}{2} -D_d + O\left( t^{-1}\right) & \ge \frac{dD_d}{2t} - \frac{8d^2c}{\sqrt{t}} \\ 
& \iff \\
\frac{(d-1)D_{d-1}}{2}+ \frac{8d^2c}{\sqrt{t}} + O\left( t^{-1}\right) & \ge D_d + \frac{dD_d}{2t} \\
\end{split}
\end{equation}
but the last inequality is true since we have $(d-1)D_{d-1} = (d+2)D_d$ and $D_d = \frac{2(m+3)(m+1)}{(d+3)(d+2)(d+1)}$. Returning to (\ref{ineq:psix2}), we just proved that $X_{t+1}^{(d)}$ is a submartingale with fail probability bounded from above by $(t+1)^{d-m}e^{-c^2}$. And its variation $\Delta X_t^{(d)}$ satisfies the upper bound below
\begin{equation}
\begin{split}
\left| \Delta X^{(d)}_s \right| & \le \frac{\Delta N_s(d)+D_d + 16dcs^{-1/2} + dN_s(d)(2s)^{-1}}{\psi_d(s+1)} \\
& \le \frac{m + 2/(d+2)+17dc s^{-1/2}+d/2}{\psi_d(s+1)} \\
& \le \frac{2d}{\psi_d(s+1)}+ \frac{17dc}{\sqrt{s}\psi_d(s+1)},
\end{split}
\end{equation}
since $\Delta N_s(d) \le m$, $N_s(d) \le s$ and $D_d \le 2/(d+2)$ for all $s$ and $d$. Thus, there is a positive constant $M$, such that
\begin{equation}\label{ineq:deltaxs2}
\left| \Delta X^{(d)}_s \right|^2  \le \frac{16d^2}{\psi_d^2(s+1)}+\frac{Md^2c^2}{s\psi_d^2(s+1)}.
\end{equation}

The lower bound for $N_t(d)$ is proven applying Theorem 2.36 of \cite{ChungLu} on $X_t^{(d)}$, setting
\[
\lambda = 2c \cdot \sqrt{\sum_{s=d}^{t+1}\left| \Delta X^{(d)}_s \right|^2}.
\]
The upper bound is obtained the same way, but considering the process
\[
^{-}X_t^{(d)} := \frac{N_t(d) - D_dt - 16d\cdot c\cdot \sqrt{t}}{\psi_d(t)},
\]
which is a supermartingale.
\end{proof}

\bibliography{ref}

\begin{thebibliography}{10}

\bibitem{AB}
A-L. Barab\'{a}si and R.~Albert.
\newblock Emergence of scaling in random networks.
\newblock {\em Science}, 1999.

\bibitem{BollobasRG}
B.~Bollob\'{a}s.
\newblock {\em Random graphs}.
\newblock Cambridge studies in advanced mathematics. Cambridge university
  press, 2001.

\bibitem{BR}
B.~Bollob\'{a}s and O.~Riordan.
\newblock Mathematical results on scale-free random graphs. in handbook of
  graphs and networks: From the genome to the internet.
\newblock pages 1--34, 2003.

\bibitem{BollobasEtAl_Degree}
B. Bollob\'{a}s, O. Riordan, J. Spencer, and G. Tusn\'{a}dy.
\newblock The degree sequence of a scale-free random graph process.
\newblock {\em Random Structures \& Algorithms}, 18(3):279--290, 2001.

\bibitem{BuckleyOsthus_Degree}
P. G. Buckley and D. Osthus.
\newblock Popularity based random graph models leading to a scale-free degree
  sequence.
\newblock {\em Discrete Mathematics}, 282(1–3):53 -- 68, 2004.

\bibitem{ChungLu}
F.~Chung and L.~Lu.
\newblock {\em Complex Graphs and Networks (Cbms Regional Conference Series in
  Mathematics)}.
\newblock American Mathematical Society, Boston, MA, USA, 2006.

\bibitem{DurrettRGD}
R.~Durrett.
\newblock {\em Random Graph Dynamics (Cambridge Series in Statistical and
  Probabilistic Mathematics)}.
\newblock Cambridge University Press, 2006.

\bibitem{ER1}
P.~Erd\"{o}s and A.~R\'{e}nyi.
\newblock On random graphs i.
\newblock {\em Publ. Math. Debrecen}, 6:290, 1959.

\bibitem{freedman1975}
D.~A. Freedman.
\newblock On tail probabilities for martingales.
\newblock {\em Ann. Probab.}, 3(1):100--118, 1975.

\bibitem{JLR}
S.~Janson, T.~Luczak, and A.~Rucinski.
\newblock {\em Theory of random graphs}.
\newblock John Wiley and Sons, 2000.

\bibitem{HK}
B.~Jim and P.~Holme.
\newblock Growing scale-free networks with tunable clustering.
\newblock {\em Phys. Rev. E}, 2002.

\bibitem{Kumar}
R.~Kumar, P.~Raghavan, S.~Rajagopalan, D.~Sivakumar, A.~Tomkins, and E.~Upfal.
\newblock Random graph models for the web graph.
\newblock In {\em FOCS}, pages 57--65, 2000.

\bibitem{Newman}
M.~Newman.
\newblock {\em Networks: An Introduction}.
\newblock Oxford University Press, Inc., New York, NY, USA, 2010.

\bibitem{ORS1}
L.~Ostroumova, A.~Ryabchenko, and E.~Samosvat.
\newblock Generalized preferential attachment: Tunable power-law degree
  distribution and clustering coefficient.
\newblock {\em Lectures Notes in Computer Science}, 2013.

\bibitem{SW}
S.H. Strogatz and D.~J. Watts.
\newblock Collective dynamics of 'small-world' networks.
\newblock {\em Nature}, 1998.

\bibitem{vdH}
R.~Van Der~Hofstad.
\newblock Random graphs and complex networks.
\newblock {\em Available on http://www. win. tue. nl/rhofstad/NotesRGCN. pdf},
  2009.

\end{thebibliography}
\bibliographystyle{plain}

\end{document}